\theoremstyle{plain}
\newtheorem{theorem}{Theorem}[section]
\newtheorem{corollary}[theorem]{Corollary}
\newtheorem{lemma}[theorem]{Lemma}
\theoremstyle{remark}
\theoremstyle{definition}
\newtheorem{definition}[theorem]{Definition}
\newcommand{\R}{\mathbb{R}}
\title{Well-posed problem for a combustion model in a multilayer porous medium}
\author[1]{Marcos R. Batista\thanks{Instituto Federal de Educa\c c\~ao, Campus Goi\^ania, GO, $74055-110,$ Brazil}, Alysson Cunha\thanks{Instituto de Matem\'atica e Estat\'istica - Universidade Federal de Goi\'as, Campus Samambaia, Goi\^ania , GO, $74690-900,$ Brazil}, Jesus C. Da Mota\thanks{Instituto de Matem\'atica e Estat\'istica - Universidade Federal de Goi\'as, Campus Samambaia, Goi\^ania , GO, $74690-900,$ Brazil}, Ronaldo A. Santos\thanks{Instituto de Matem\'atica e Estat\'istica - Universidade Federal de Goi\'as, Campus Samambaia, Goi\^ania , GO, $74690-900,$ Brazil}}
\begin{document}

\maketitle

\begin{abstract}
Combustion occurring in porous media has various practical applications, such as in in-situ combustion processes in oil reservoirs, the combustion of biogas in sanitary landfills, and many others. A porous medium where combustion takes place can consist of layers with different physical properties. This study demonstrates that the initial value problem for a combustion model in a multi-layer porous medium has a unique solution, which is continuous with respect to the initial data and parameters in $\mathtt{L}^2(\R)^n$. In summary, it establishes that the initial value problem is well-posed in $\mathtt{L}^2(\R)^n$. The model is governed by a one-dimensional reaction-diffusion-convection system, where the unknowns are the temperatures in the layers. Previous studies have addressed the same problem in $\mathtt{H}^2(\R)^n$. However, in this study, we solve the problem in a less restrictive space, namely $\mathtt{L}^2(\R)^n$. The proof employs a novel approach to combustion problems in porous media, utilizing an evolution operator defined from the theory of semigroups in Hilbert space and Kato's theory for a well-posed associated initial value problem.
\end{abstract}

{\bf Keyword:}	Reaction-diffusion-convection system, multilayer porous medium,  
	semigroups theory, evolution operator,  well-posedness solution.\\
	
	
	{\bf MSC:} 35K51, 35K57, 76S05, 80A25.


\section{Introduction}\label{introduction}
Combustion within porous media has extensive practical
applications, and there exists a vast array of both theoretical and
experimental literature in this field. We reference here some
recent works in this area, which are by no means exhaustive \cite{Banerjee, Fatehi, Gharehghani, Mujeebu, Trimis}.

One technique utilized for extracting oil from a petroleum
reservoir (a porous medium) is referred to as ``in situ combustion"
\cite{Ado, Chapiro, Coats, Sarathi}. This method involves the propagation of a burning flame within the reservoir, which reduces the viscosity of the oil and facilitates its flow towards the producing wells.

Porous media where crude oil is present may consist of multiple layers \cite{Gao, Lefkovits}, 
each characterized by variations in properties such as porosity, density, and thermal conductivity, among others.

In \cite{DaMota-Schecter}, a model was formulated to investigate the propagation of a combustion front through a porous medium containing two parallel layers with different properties. The reaction involves oxygen and a premixed solid fuel in each layer. The model consists of a coupled nonlinear reaction-diffusion-convection system along with a set of ordinary differential equations. These equations are derived from balance equations and Darcy’s law. Assuming incompressibility, the variables are reduced to the temperatures and unburned fuel concentrations in the layers. This study presents the discovery of a family of traveling wave solutions that connect a burned state behind the combustion front to an unburned state ahead of it. 

In reference \cite{DaMota-Marcelo}, the Cauchy problem for the two-layer model, specifically in cases where the fuel concentrations are known functions, has been resolved. The iterative method of upper and lower solutions was employed to establish the existence and uniqueness of a classical solution.

The existence of a classical solution to the Cauchy problem for the complete two-layer model, where temperatures and fuel concentrations are unknown, was established in \cite{DaMota-Marcelo-Ronaldo}. The employed techniques involved iterations of the fundamental solutions to an associated linear problem.

In \cite{Batista1}, a generalization of the two-layer model for any number of layers ($n$-layers) was presented. This model takes into account the heat loss to the external environment, which was neglected in \cite{DaMota-Schecter}. A summary of this $n$-layer model is provided in Appendix \ref{model}. The existence and uniqueness of a classical solution to an initial and boundary value problem associated with the model was proven in\cite{DaMota-Schecter} for the case where the fuel concentration in each layer is a known function. The proof also involves monotone iterations of upper and lower solutions conveniently constructed for the problem. 

With regards to the complete system, where the temperatures and fuel concentrations in the $n$ layers are unknown, the existence of a classical solution to the initial and boundary problem was
proven in \cite{Batista2}. The local solution was obtained by defining an operator in a set of H\"older continuous functions, and Schauder's fixed-point theorem was used to find a fixed point as the desired solution. Using Zorn's lemma, the local solution was extended to the global-in-time solution. However, uniqueness and continuous dependence were not proven in \cite{Batista2}.

In the present work, we consider the $n$-layer model in a more realistic case, where physical properties such as porosity, initial fuel concentration, and thermal capacities are all functions of the spatial variable $x$, instead of being constants as in previous studies. However, it should be noted that in our study, we assume that the fuel concentrations are known functions. If this assumption does not hold and the fuel concentrations are unknown, the theory we utilize, mentioned below, cannot be directly applied. This is because the partial differential equation (PDE) system described by Eqs.\eqref{be14}-\eqref{bmf4} is not of the reaction-diffusion type in that case. 

In summary, we prove that the following initial value problem is well-posed in $\mathtt{L}^2(\R)^n$ using the abstract semigroups theory of operators in a Hilbert space and Kato's theory for a well-posed initial value problem. Recently, in \cite{Alarcon}, the same problem was proven in $\mathtt{H}^2(\R)^n$, a more restrictive space than $\mathtt{L}^2(\R)^n$.

Thus, this paper is concerned with the following Cauchy problem 

\begin{align}\label{combustion-problem}
\left\{
\begin{array}{l}
u_t + L(t)u = f(x, t, u), \,\,\, x \in \R, \,\,\, t >0,  \\
u(x, 0) = \phi(x), 
\end{array}
\right.
\end{align}
where $u = (u_1, \ldots, u_n)$ is the vector of the unknown temperatures, $u_i = u_i(x, t)$ is the temperature of the layer $i$, $\phi = (\phi_1, \ldots, \phi_n)$ is the vector of the initial temperatures and $\phi_i = \phi_i(x)$ is the initial temperature of the layer $i$, for $i = 1, \ldots, n$ . The partial differential operator $L(t)$ is defined by
\begin{align}\label{operatorL} 
L(t)u = \big(L_1(t)u_1, \ldots, L_n(t)u_n \big),
\end{align}
where
\begin{equation}\label{operatorLi}
L_i(t)u_i := -\alpha_i(x, t)\,\partial_{xx}u_i + \beta_i(x, t)\,\partial_{x}u_i,\,\,\,
i = 1, \ldots, n.
\end{equation}
The coefficients $\alpha_i$, $\beta_i$, and the source function $f =(f_1, \ldots, f_n)$, are defined in Appendix \ref {model}. The deduction of the complete model can be seen in \cite{Batista1}. Specifically, we have
\begin{equation}\label{coeficients}
\alpha_i(x, t) = \frac{\lambda_i}{a_i + b_i y_i(x, t)}, \,\,\, \mbox{and} \,\,\, \beta_i(x, t) = 
\frac{c_i}{a_i + b_i y_i(x, t)}, 
\end{equation}
 
\begin{align} \label{reaction-function-1} 
	&f_1(x, t, u) = \frac{-(c_1)_x\,u_1 }{a_1 + b_1 y_1} + 
	\frac{(K_1 b_1 u_1+d_1)y_1\,g(u_1)}{a_1 + b_1 y_1} + \frac{q_1(u_2 - u_1)}{a_1 + b_1 y_1}\
	\nonumber
	\\   
	& \hspace{0.8in} - \frac{\hat q_1(u_1 - u_e)}{a_1+b_1 y_1}\,,
	\nonumber \\
	&f_i(x, t, u) = \frac{-(c_i)_x\,u_i }{a_i + b_i y_i} + 
	\frac{(K_i b_i u_i+d_i)y_i\,g(u_i)}{a_i + b_i y_i} - \frac{q_{i-1}(u_i - u_{i-1})}{a_i + b_i y_i}
	\\
	& \hspace{0.8in} + \frac{q_i(u_{i+1} - u_i)}{a_i + b_i y_i}, \quad i = 2, \ldots, n-1\,,
	\nonumber \\
	&f_n(x, t, u) = \frac{-(c_n)_x\,u_n }{a_n + b_n y_n} + 
	\frac{(K_n b_n u_n+d_n)y_n\,g(u_n)}{a_n + b_n y_n} - \frac{q_{n-1}(u_n - u_{n-1})}{a_n + b_n y_n}
	\nonumber
	\\
	& \hspace{0.8in} - \frac{\hat q_2(u_n - u_e)}{a_n + b_n y_n}\,. 
	\nonumber
\end{align}
Here, the functions $a_i$, $b_i$, $c_i$, $d_i$, $\lambda_i$, $q_i$, $K_i$, $\hat q_1$, and 
$\hat q_2$, depend on the physical properties of the porous medium layers, as described by Eqs. \eqref{parametros1}, \eqref{parametros2},
and \eqref{parametros3}, and they are all nonnegative functions of the
spatial variable $x$. In this context, $a_i$ and $\lambda_i$ are considered positive. The function $g$ follows the Arrhenius law, given by Eq. \eqref{darcy}. The temperature of the external environment is denoted by
$u_e$, which is assumed to be constant. Finally, the fuel concentration $y_i$ is a known nonnegative and bounded\footnote{We can reasonably assume that the fuel concentrations $y_i$ take their values in the interval $[0, 1]$, as this is physically expected, in view of the concentration definition.} 
function of $(x, t)$.

Throughout this study, the index $i$ refers to the layer $i$ of the porous medium, and unless stated otherwise $i = 1, \ldots, n$.
When there is no ambiguity, we will omit the variable $x$ from $u(x, t)$, $\phi(x)$, $y(x,t)$, $f(x, t, u)$, $\alpha_i(x, t)$, and $\beta_i(x, t)$ by simply using $u(t)$, $\phi$, $y(t)$, $f(t, u)$, $\alpha_i(t)$, and $\beta_i(t)$.

Direct application of the semigroup theory to Problem
\eqref{combustion-problem} is not possible because the operator
$L(t)$ is not closed in a Hilbert space. Therefore, let us consider
the following associated problem:
\begin{equation}\label{combustion-problem3}\left\{
\begin{array}{l}
u_t + G(t)u = f(t, u), \,\,\, t > 0, \\
u(0) = \phi.
\end{array}
\right.
\end{equation}
Here, $G(t)$ is the closure of $L(t)$ in $\mathtt{L}^2(\R)^n$, it is defined by
$$G(t)u=\big(G_1(t)u_1, \ldots, G_n(t)u_n \big),$$ where 
$G_i(t)$ is the closure of $L_i(t)$ in $\mathtt{L}^2(\R)$,
given by
\begin{equation}\label{operatorAi}
G_i(t)\psi= -\alpha_i(t)\,\psi^{\prime\prime} + \beta_i(t)\,\psi^\prime,\;\;\psi \in D\left({G_i}(t)\right)= \mathtt{H}^2(\R).
\end{equation}

The solutions discussed herein are weak solutions (mild solutions), as defined
below:
\begin{definition}
Given  $\phi = (\phi_1, \ldots, \phi_n) \in \mathtt{L}^2(\R)^n$, a local solution to problem \eqref{combustion-problem3} 
is a function
$u = (u_1,\ldots, u_n)$ $\in C\big([0,\,T], \,\mathtt{L}^2(\R)^n \big)$
that satisfies the following integral equation:
\begin{align}\label{integral-form1}
u(t) = U(t,\,0) \phi + \int_0^t\,U(t,\,\tau)\,f\big(\tau,\,u(\tau)\big)\,d\tau, \,\,\,\, t \in [0, T],
\end{align}
for some $T > 0$, where
$U$ is the evolution operator associated with $G(t)$, defined by 
Lemma~\ref{existence-U}. The solution is global in time if it satisfies \eqref{integral-form1} for any $T > 0$. 
\end{definition}

Our main results are summarized in the next three theorems,
whose proofs utilize a novel approach to combustion in porous
media, namely, the semigroups theory of operators in a Hilbert
space and Kato's theory for the initial value problem.

\begin{theorem} [Local solution]\label{combustion-local-theo} 
	Assuming that the hypotheses $(H1)$, $(H2)$, and $(H3)$ given in Section~\ref{existence-U-section} are satisfied, then
	the initial value problem \eqref{combustion-problem3} has a unique mild solution denoted by $u = (u_1, \ldots, u_n) \in C\big([0,\,T], \,\mathtt{L}^2(\R)^n \big)$, for some $T > 0$, provided that $\phi = (\phi_1, \ldots, \phi_n) \in \mathtt{L}^2(\R)^n$.
\end{theorem}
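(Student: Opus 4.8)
The plan is to recast the integral identity \eqref{integral-form1} as a fixed-point problem and invoke the Banach fixed-point theorem in the Banach space $C\big([0,T],\mathtt{L}^2(\R)^n\big)$ for $T$ sufficiently small. First I would record the two structural facts about the evolution operator furnished by Lemma~\ref{existence-U}: that $(t,\tau)\mapsto U(t,\tau)$ is strongly continuous on the triangle $0\le\tau\le t\le T$ and uniformly bounded, say $\|U(t,\tau)\|_{\mathcal{L}(\mathtt{L}^2(\R)^n)}\le M$ for some $M\ge 1$. These are the only properties of $U$ the argument consumes.

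Next I would establish the analytic properties of the source term. Writing each $f_i$ as in \eqref{reaction-function-1}, the coefficients $\frac{(c_i)_x}{a_i+b_iy_i}$, $\frac{q_i}{a_i+b_iy_i}$, $\frac{\hat q_1}{a_1+b_1y_1}$, and $\frac{\hat q_2}{a_n+b_ny_n}$ are bounded functions of $(x,t)$, since $a_i$ is positive and the $y_i$ together with the remaining parameters are nonnegative and bounded. The only genuinely nonlinear contribution is the Arrhenius term, which after absorbing bounded prefactors takes the form $m_i(x,t)\,u_i\,g(u_i)+\tilde m_i(x,t)\,g(u_i)$ with $m_i,\tilde m_i$ bounded. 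Using hypotheses $(H2)$ and $(H3)$ — which encode the boundedness and global Lipschitz character of $g$ and of $s\mapsto s\,g(s)$, together with the integrability needed to keep the zeroth-order Arrhenius term square-integrable on $\R$ — I would prove two estimates: that $\tau\mapsto f\big(\tau,u(\tau)\big)$ is continuous into $\mathtt{L}^2(\R)^n$ whenever $u\in C\big([0,T],\mathtt{L}^2(\R)^n\big)$, and that $f(t,\cdot)$ is Lipschitz on bounded sets, i.e. there is $L=L(R')$ with $\|f(t,u)-f(t,v)\|_{\mathtt{L}^2}\le L\|u-v\|_{\mathtt{L}^2}$ for $\|u\|,\|v\|\le R'$, uniformly in $t\in[0,T]$.

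With these in hand I would define the operator
\[
(\Phi u)(t)=U(t,0)\phi+\int_0^t U(t,\tau)\,f\big(\tau,u(\tau)\big)\,d\tau,
\]
verify that $\Phi u\in C\big([0,T],\mathtt{L}^2(\R)^n\big)$ by strong continuity of $U$ and continuity of the integrand, and then work on the closed ball $B_R=\{u:\ \|u(t)-U(t,0)\phi\|_{\mathtt{L}^2}\le R\ \text{for all }t\in[0,T]\}$. For $u\in B_R$ one has $\|u(t)\|\le M\|\phi\|+R=:R'$, so $\tau\mapsto f(\tau,u(\tau))$ stays in a fixed bounded set; the self-map property then follows from $\|(\Phi u)(t)-U(t,0)\phi\|\le MT\sup_{\|w\|\le R'}\|f(\tau,w)\|$, and the contraction property from $\|\Phi u-\Phi v\|_{C}\le MLT\,\|u-v\|_{C}$. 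Choosing $T$ small enough that simultaneously $MT\sup\|f\|\le R$ and $MLT<1$ produces a unique fixed point in $B_R$, which is precisely the mild solution. Uniqueness on the whole of $C\big([0,T],\mathtt{L}^2(\R)^n\big)$, rather than merely within $B_R$, then follows from a Gronwall estimate comparing any two mild solutions.

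I expect the main obstacle to be the $\mathtt{L}^2$-Lipschitz estimate for the reaction term, specifically the product $u_i\,g(u_i)$: unlike the $\mathtt{H}^2(\R)^n$ setting of \cite{Alarcon}, here there is no Sobolev embedding into $\mathtt{L}^\infty$ available to control pointwise products, so one must exploit the Nemytskii-operator structure and the assumption (built into $(H2)$–$(H3)$) that $s\mapsto s\,g(s)$ has bounded derivative — hence is globally Lipschitz and vanishes at $s=0$ — to keep the induced superposition operator Lipschitz from $\mathtt{L}^2(\R)$ into $\mathtt{L}^2(\R)$. Ensuring that the non-homogeneous piece $\tilde m_i\,g(u_i)$ remains square-integrable on the unbounded domain $\R$ is the companion difficulty, resolved by the decay and integrability hypotheses on the coefficients.
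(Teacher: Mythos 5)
Your proposal is correct and takes essentially the same route as the paper: the mild (Duhamel) formulation, the bound $\|U(t,\tau)\psi\|\leq e^{\beta t}\|\psi\|$ from Lemma~\ref{existence-U}(iii), the boundedness/continuity/Lipschitz properties of the source term (the paper's Lemma~\ref{properties-fi}, proved via the mean value theorem applied to $s\mapsto s\,g(s)$), a Banach fixed-point argument on a ball in $C\big([0,T'],\mathtt{L}^2(\R)^n\big)$ for $T'$ small, and a Gronwall estimate for uniqueness. The only cosmetic differences are that you center the invariant ball at $t\mapsto U(t,0)\phi$ rather than at the origin, and that the square-integrability of the zeroth-order part is in fact consumed by the constant external-temperature term via $\hat q_1,\hat q_2\in\mathtt{L}^2(\R)$ in $(H3)$ --- the Arrhenius piece $\tilde m_i\,g(u_i)$ is automatically in $\mathtt{L}^2(\R)$ because $g$ is globally Lipschitz with $g(0)=0$, so no coefficient decay is needed there.
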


\begin{theorem}[Global Solution] \label{combustion-global-theo} 
	Assuming that the hypotheses $(H1)$, $(H2)$, and $(H3)$ given in
	Section~\ref{existence-U-section} are satisfied, where
	$\Omega_T = \R \times [0, T]$ is replaced by $\Omega = \R
	\times [0, \infty)$ in $(H2)$, then the initial value problem
	\eqref{combustion-problem3} has a global unique mild solution
	denoted by $u = (u_1, \ldots, u_n) \in C\big([0, \infty),
	\mathtt{L}^2(\R)^n \big)$, provided that $\phi = (\phi_1, \ldots,
	\phi_n) \in \mathtt{L}^2(\R)^n$. 
\end{theorem}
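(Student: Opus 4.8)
The plan is to upgrade the local solution furnished by Theorem~\ref{combustion-local-theo} to a global one by iterating the local existence result over consecutive time steps of a \emph{fixed} length. The crucial observation is that, once $(H2)$ is posed on the full strip $\Omega = \R \times [0,\infty)$, the bounds it provides on $f$ and on the coefficients $\alpha_i,\beta_i$ become uniform in $t$; in particular $f(t,\cdot)$ is globally Lipschitz on $\mathtt{L}^2(\R)^n$ with a Lipschitz constant $L$ and a linear growth bound that do not depend on $t \in [0,\infty)$. Combined with the Kato-type estimate $\|U(t,s)\| \le M e^{\omega(t-s)}$ for the evolution operator of Lemma~\ref{existence-U}, with $M,\omega$ independent of $t$, this makes the local existence time uniform, which is exactly what powers the extension.

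First I would revisit the contraction argument behind Theorem~\ref{combustion-local-theo}. For data $\psi \in \mathtt{L}^2(\R)^n$ and $\delta > 0$, the map
\[
\Phi(u)(t) = U(t,0)\psi + \int_0^t U(t,s)\,f\big(s,u(s)\big)\,ds
\]
acts on $C\big([0,\delta],\mathtt{L}^2(\R)^n\big)$, and the uniform bounds give $\sup_{t\in[0,\delta]}\|\Phi(u)(t)-\Phi(v)(t)\|_{\mathtt{L}^2} \le M e^{\omega\delta} L\,\delta \,\sup_{t\in[0,\delta]}\|u(t)-v(t)\|_{\mathtt{L}^2}$. Choosing $\delta$ so that $M e^{\omega\delta}L\delta < 1$ makes $\Phi$ a contraction; since $M,\omega,L$ are the same time-independent constants for every starting instant, this $\delta$ can be fixed \emph{once and for all}, independent of the initial data $\psi$ and of the starting time.

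With a uniform step $\delta$ in hand, I would extend the solution inductively: solve on $[0,\delta]$ from $\phi$, then take $u(\delta) \in \mathtt{L}^2(\R)^n$ as new initial data and solve on $[\delta,2\delta]$, and so on; the evolution property $U(t,s)U(s,r)=U(t,r)$ guarantees that the pieces splice into a single mild solution satisfying \eqref{integral-form1} on each $[0,k\delta]$. Because every step has the same length $\delta$, after $k$ steps the solution is defined on $[0,k\delta]$ and $k\delta \to \infty$, so $u$ exists on all of $[0,\infty)$ and lies in $C\big([0,\infty),\mathtt{L}^2(\R)^n\big)$. Global uniqueness follows from the local uniqueness in Theorem~\ref{combustion-local-theo} applied step by step: two global solutions coincide on $[0,\delta]$, hence share the datum at $\delta$ and coincide on $[\delta,2\delta]$, and an easy induction (equivalently, the set where they agree is open and closed in $[0,\infty)$) propagates the agreement to the whole half-line. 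As a byproduct, inserting the growth bound on $f$ into \eqref{integral-form1} and applying Gronwall's inequality yields the a priori estimate $\|u(t)\|_{\mathtt{L}^2} \le M e^{\omega t}\|\phi\|_{\mathtt{L}^2} + \cdots$, which confirms that the solution stays finite on every bounded interval and gives an alternative, blow-up-alternative route to the same conclusion.

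The main obstacle I anticipate is not the iteration itself but verifying that $(H2)$ on $\Omega$ truly delivers \emph{time-independent} constants $L$, $M$, $\omega$. This requires controlling each term of \eqref{reaction-function-1} uniformly for all $t\ge 0$: the denominators $a_i+b_i y_i$ must stay bounded below (using $a_i>0$ and $y_i\ge 0$ bounded), so that the transport, exchange and heat-loss terms are linear in $u$ with bounded coefficients, while the reaction term $u_i\,g(u_i)$ must retain linear growth and a uniform Lipschitz constant, which hinges on the boundedness of $g$ and of $u\mapsto u\,g'(u)$ under the Arrhenius law. Equally, the evolution operator constants $M,\omega$ produced by Kato's theory must be shown to be uniform over $[0,\infty)$ rather than merely over each $[0,T]$; this is precisely where the hypothesis is strengthened from $\Omega_T$ to $\Omega$, and confirming it is the one genuinely model-specific step of the argument.
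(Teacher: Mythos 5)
Your route (continuation by steps of a \emph{fixed} length $\delta$, powered by the global-in-$u$ Lipschitz property of $f$) is a legitimate classical alternative, and your splicing and step-by-step uniqueness arguments are fine as far as they go. The genuine gap is the load-bearing uniformity claim: you assume an evolution-operator bound of the form $\|U(t,s)\|_{\mathcal{B}(\mathtt{L}^2(\R)^n)} \leq M e^{\omega(t-s)}$, exponential in the \emph{elapsed} time $t-s$, with $M,\omega$ uniform over all $0 \leq s \leq t < \infty$. What Lemma~\ref{existence-U}(iii) actually provides is $\|U(t,t')\psi\| \leq e^{\beta t}\|\psi\|$, exponential in the \emph{absolute} time $t$, and only on the triangle $0 \leq t' \leq t \leq T$. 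Under that weaker estimate the contraction constant for your step on $[k\delta,(k+1)\delta]$ is $\kappa\,\delta\,e^{\beta(k+1)\delta}$, which grows without bound in $k$, so no single $\delta$ works for every step and your ladder stalls after finitely many rungs. You do flag this as ``the one genuinely model-specific step,'' but flagging it does not discharge it: repairing it means re-entering Kato's stability construction, translating the family $t \mapsto G(s+t)$ and checking that the stability constants depend only on the uniform bounds $k_1,k_2,k_3$ of $(H1)$ and $(H2)$-on-$\Omega$, a verification recorded neither in the paper nor in the cited source \cite{Alarcon} for Lemma~\ref{existence-U}. As written, the proof is therefore incomplete at its central point.

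It is instructive that the paper's proof is organized precisely to avoid needing elapsed-time uniformity. In Lemma~\ref{global-pazy} the local step from time $t_0$ has length $\epsilon(t_0,\|\phi\|)$ given by \eqref{defepsilon}, which \emph{shrinks} as $t_0$ grows (through $R(t_0)=2\|\phi\|e^{\beta(t_0+1)}$); this variable-step continuation yields a maximal interval $[0,t_{\mathrm{max}})$ together with the blow-up alternative $t_{\mathrm{max}}<\infty \Rightarrow \lim_{t\uparrow t_{\mathrm{max}}}\|u(t)\|=\infty$. Globality then follows from the a priori estimate \eqref{priori-est}, namely $\|u(t)\| \leq e^{\beta t}\big(\|\phi\| + \mu t + \kappa \int_0^t \|u(\tau)\|\,d\tau\big)$, and Gronwall's lemma, all of which is compatible with the $e^{\beta t}$ bound. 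Notice that what you relegate to a ``byproduct'' at the end of your argument (inserting the growth bound on $f$ into \eqref{Aintegral-form1}, applying Gronwall, and invoking the blow-up alternative) is in fact the paper's entire proof; adopting it as the main line, with variable steps in place of your fixed $\delta$, is the cheapest repair of your proposal.
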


\begin{theorem}[Continuous dependence] \label{combustion-continuous-dependence-theo}
Under the same hypotheses as in Theorem~\ref{combustion-local-theo},
then the corresponding solution given by 
this theorem is continuous in the $\mathtt{L}^2(\R)^n\text{-norm}$ with respect to the initial data and parameters.	
\end{theorem}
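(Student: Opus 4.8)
The plan is to establish a quantitative estimate that controls the $\mathtt{L}^2(\R)^n$-distance between two mild solutions by the distance between their data, and then read off continuity. Let $u$ be the solution associated with initial datum $\phi$ and the parameter set $\mathcal{P}$ (collecting $a_i,b_i,c_i,d_i,\lambda_i,q_i,K_i,\hat q_1,\hat q_2,u_e$ and the known concentrations $y_i$), and let $\bar u$ be the solution associated with $\bar\phi$ and $\bar{\mathcal{P}}$. These data determine, through \eqref{coeficients} and \eqref{reaction-function-1}, the coefficients $\alpha_i,\beta_i$, hence the closed operators $G(t),\bar G(t)$ and their evolution operators $U,\bar U$, as well as the source terms $f,\bar f$. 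By the construction in Theorem~\ref{combustion-local-theo}, both solutions live in a fixed ball of $C\big([0,T],\mathtt{L}^2(\R)^n\big)$, on which $f$ is Lipschitz in its third argument with a uniform constant $L$; I would first fix such a common $T$ and ball valid for all nearby data. The goal is an inequality of the form $\sup_{[0,T]}\|u(t)-\bar u(t)\|_{\mathtt{L}^2}\le C\big(\|\phi-\bar\phi\|_{\mathtt{L}^2}+d(\mathcal P,\bar{\mathcal P})\big)$, where $d$ measures the parameter distance and $C$ depends only on $T$, the uniform bounds, and $L$.

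The first step is to subtract the two integral equations \eqref{integral-form1} and decompose each difference into a piece carrying the data discrepancy and a piece carrying the solution discrepancy:
\[
u(t)-\bar u(t) = \big(U(t,0)-\bar U(t,0)\big)\bar\phi + U(t,0)(\phi-\bar\phi) + \int_0^t \Big[ U(t,\tau)f(\tau,u(\tau)) - \bar U(t,\tau)\bar f(\tau,\bar u(\tau))\Big]\,d\tau,
\]
and, inside the integral, insert intermediate terms,
\begin{align*}
U(t,\tau)f(\tau,u)-\bar U(t,\tau)\bar f(\tau,\bar u)
&= U(t,\tau)\big[f(\tau,u)-f(\tau,\bar u)\big] \\
&\quad + U(t,\tau)\big[f(\tau,\bar u)-\bar f(\tau,\bar u)\big] \\
&\quad + \big(U(t,\tau)-\bar U(t,\tau)\big)\bar f(\tau,\bar u).
\end{align*}
Then I would invoke four ingredients: the uniform bound $\|U(t,\tau)\|,\|\bar U(t,\tau)\|\le M$ from the evolution-operator lemma; the Lipschitz bound $\|f(\tau,u)-f(\tau,\bar u)\|_{\mathtt{L}^2}\le L\|u-\bar u\|_{\mathtt{L}^2}$ used in Theorem~\ref{combustion-local-theo}; the continuity of $f$ in the parameters, $\|f(\tau,\bar u)-\bar f(\tau,\bar u)\|_{\mathtt{L}^2}\le \omega\big(d(\mathcal P,\bar{\mathcal P})\big)$, which follows from the explicit rational structure of \eqref{reaction-function-1} together with the positivity of $a_i$; and the continuity of the evolution operators, $\sup_{0\le\tau\le t\le T}\|U(t,\tau)-\bar U(t,\tau)\|\le \eta\big(d(\mathcal P,\bar{\mathcal P})\big)$ with $\omega,\eta\to 0$ as $d\to 0$. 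Collecting terms, the solution-discrepancy contributions yield $ML\int_0^t\|u(\tau)-\bar u(\tau)\|_{\mathtt{L}^2}\,d\tau$, while the remaining contributions are bounded by $M\|\phi-\bar\phi\|_{\mathtt{L}^2}+\eta(d)\|\bar\phi\|_{\mathtt{L}^2}+TM\,\omega(d)+T\eta(d)\sup_\tau\|\bar f(\tau,\bar u)\|_{\mathtt{L}^2}$. Gronwall's inequality then closes the estimate and gives the asserted continuity.

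The main obstacle is the fourth ingredient: the continuity of $U(t,\tau)$ with respect to the coefficients $\alpha_i,\beta_i$, hence with respect to the physical parameters. I would derive it from the fundamental identity for evolution families,
\[
U(t,\tau)-\bar U(t,\tau) = \int_\tau^t U(t,s)\big[\bar G(s)-G(s)\big]\bar U(s,\tau)\,ds,
\]
valid on the common domain $\mathtt{H}^2(\R)$ appearing in \eqref{operatorAi}. The delicate point is that $\bar G(s)-G(s)=-(\bar\alpha_i-\alpha_i)\partial_{xx}+(\bar\beta_i-\beta_i)\partial_x$ carries a second derivative, so the naive $\mathtt{L}^2$ bound fails because $\|\partial_{xx}\bar U(s,\tau)\|_{\mathtt{L}^2}$ blows up like $(s-\tau)^{-1}$, which is not integrable. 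Since each $G_i(t)$ is uniformly elliptic, the family is parabolic (analytic), and I would exploit the smoothing estimates $\|G(s)^\theta\bar U(s,\tau)\|_{\mathtt{L}^2}\le C(s-\tau)^{-\theta}$, distributing the differential order across both factors $U(t,s)$ and $\bar U(s,\tau)$ by fractional powers so that each carries an exponent strictly below $1$, thereby restoring integrability; an alternative is to transfer the continuity through the resolvents $(\lambda+G_i(t))^{-1}$, which depend Lipschitz-continuously on the coefficients in $\mathtt{L}^2$ operator norm, into Kato's construction of the evolution operator. Reconciling this second-order perturbation with the $\mathtt{L}^2$ setting is the heart of the argument; once $\eta(d)\to 0$ is secured, the remainder is the routine Gronwall estimate described above.
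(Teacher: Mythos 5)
Your five-term decomposition is exactly the paper's: in Section~\ref{proof-continuous-dependence} the difference $u^j(t)-u(t)$ is split into $\delta_0^j$ (data difference), $\delta_1^j=(U^j-U)(t,0)\phi$, $\delta_2^j$ (the Lipschitz term), $\delta_3^j$ (the $f^j-f$ term) and $\delta_4^j=\int_0^t(U^j-U)(t,\tau)f(\tau,u(\tau))\,d\tau$, and the estimate is closed by Gronwall just as you propose. The genuine divergence is your fourth ingredient. The paper never proves, nor needs, operator-norm continuity $\|U-\bar U\|_{\mathcal{B}(\mathtt{L}^2(\R)^n)}\leq\eta(d)$: it shows $G_i^j(t)\to G_i(t)$ in $\mathcal{B}(\mathtt{H}^2,\mathtt{L}^2)$ (Lemma~\ref{lema1}(i)--(ii) --- essentially the resolvent-type computation you sketch as your fallback, since $\bar G-G$ is bounded from $\mathtt{H}^2$ to $\mathtt{L}^2$ with norm controlled by $\|\bar\alpha-\alpha\|_{\mathtt{L}^\infty}+\|\bar\beta-\beta\|_{\mathtt{L}^\infty}$) and then invokes Kato's approximation theorem to obtain only \emph{strong} convergence $U^j(t,s)\to U(t,s)$, uniformly on $\triangle$ (Lemma~\ref{lema2}). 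Strong convergence suffices because wherever $U^j-U$ appears it acts on fixed vectors: on $\phi$ in $\delta_1^j$, and on $f(\tau,u(\tau))$ inside $\delta_4^j$, where pointwise convergence in $\tau$ together with the uniform bound \eqref{consUj} and dominated convergence finishes the job; likewise $f^j\to f$ is needed only in $\mathtt{L}^1([0,T],\mathtt{L}^2(\R)^n)$ (Corollary~\ref{corolf}). The price of this soft route is that the conclusion is sequential continuity rather than a modulus; your quantitative target is strictly stronger than what the theorem asserts.

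That stronger target is also where your sketch has a concrete unresolved step. In the perturbation identity, fractional powers do not simply ``distribute'' across $\bar G(s)-G(s)=-(\bar\alpha-\alpha)\partial_{xx}+(\bar\beta-\beta)\partial_x$, because multiplication by the coefficient difference does not commute with $G(s)^{\theta}$. The workable version factors $\partial_{xx}=\partial_x\partial_x$, puts one derivative on $\bar U(s,\tau)$ at cost $(s-\tau)^{-1/2}$ and moves the other onto $U(t,s)$ by duality; integrating by parts then produces the term $\partial_x(\bar\alpha-\alpha)$, so you need smallness of the \emph{first derivative} of the coefficient difference, not merely of $\|\bar\alpha-\alpha\|_{\mathtt{L}^\infty}$. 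Under $(H1)$ this is recoverable (uniform second-derivative bounds plus $\mathtt{L}^\infty$-convergence give $W^{1,\infty}$-convergence by Landau--Kolmogorov interpolation), but you must say so explicitly. Moreover, the smoothing estimates $\|\partial_x\bar U(s,\tau)\|_{\mathcal{B}(\mathtt{L}^2)}\lesssim(s-\tau)^{-1/2}$ are nowhere provided by Lemma~\ref{existence-U}, whose properties (i)--(iii) are all you are entitled to quote; you would have to re-derive the evolution family in the parabolic (Tanabe--Sobolevskii) framework to use them. In short: your fallback route is the paper's actual proof; your main route is plausible but requires these two repairs, and the extra machinery buys a modulus of continuity the statement does not demand.
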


We note that the proof of Theorem~\ref{combustion-local-theo}
in Section~\ref{local-solution} applies to any fixed $T>0$. Thus,
by replacing $\Omega_T$ with $\Omega$ in hypothesis $(H2)$, the
proof of Theorem~\ref{combustion-continuous-dependence-theo}
covers both local and global solutions.
\section{Preliminaries}\label{preliminaries}
In this section, we present some results about the abstract linear evolution operator that will be used in the current study.

\subsection{Notations and general definitions}\label{notation}
The real numbers set is denoted by $\R$, $I \subset \R$ is an interval, and $T$ is a positive number. We denote by $X$ and $Y$ Banach spaces, where $Y \subset X$. 
The $\mathtt{L}^p$ spaces utilized
in this study include $\mathtt{L}^1$, $\mathtt{L}^2$ and
$\mathtt{L}^{\infty}$. Additionally, the Sobolev spaces
$\mathtt{H}^2$ are used throughout this work. Furthermore, the
following notations are adopted:
\begin{itemize}
	\item[] $\| \cdot \|_X$ is the norm in $X$. 
	
	If there is no ambiguity both norms 
	$\| \cdot \|_{\mathtt{L}^2(\R)}$ and $\| \cdot \|_{\mathtt{L}^2(\R)^n}$ will be denoted by $\| \cdot \|$, where for $\psi=(\psi_1,\ldots,\psi_n) \in \mathtt{L}^2(\R)^n$, $\|\psi\|_{\mathtt{L}^2(\R)^n}=\max_{1\leq i \leq n}\|\psi_i\|_{\mathtt{L}^2(\R)}$.
	\item[] $\partial_{x}=\frac{\partial}{\partial x},\,\partial_{t}=\frac{\partial}{\partial t}$.
	\item[] $\Omega=\{(x,t);\, x \in \R,\,\,t \ge 0\}$.
	\item[] $\Omega_T=\{(x,t);\, x \in \R,\,\,0 \leq t \leq T, \,\,\, T > 0\}$.
	\item[] $C(I,\, X)$ is the space of continuous functions defined from $I$ into  $X$. 
	If $I$ is compact, then it is a Banach space with the supremum norm.
	\item[] $d(u,v)=\sup_{[0,T]}\|u(t)-v(t)\|_X$ is the metric in $C(I,\, X)$.
	\item[]   $C^{2,1}(\Omega_T)$ is the space of twice continuously differentiable functions in $x$ and
	continuously differentiable in $t$.
	\item[] $\mathcal{B}(Y,X)$ \big($\mathcal{B}(X) = \mathcal{B}(X,X)$\big) is the space of all bounded linear operators from $Y$ to $X$ with norm $\|\cdot\|_{\mathcal{B}(Y,X)}$.
\end{itemize}

\subsection{Existence of the evolution operator}\label{existence-U-section}
The following hypothesis is necessary to ensure the existence of the evolution operator associated with the operator $G(t)$.

\begin{itemize}\label{H1} 
	\item[] $(H1)$: The functions $a_i$, $b_i$, $c_i$, and $\lambda_i$ are twice differentiable,
	following conditions: 
	\begin{itemize} 
		\item [(i)] For all $x\in \R$,
		$a_i(x)$ and $\lambda_i(x)$ belong to the interval $[k_1, k_2]$,
		and $b_i(x)$ and $c_i(x)$ belong to the interval $[0, k_2]$,
		where $k_1$ and $k_2$ are positive constants such that $k_1 < k_2$. 
		\item [(ii)] The derivatives $a_i^{(k)}$, $b_i^{(k)}$,
		$c_i^{(k)}$, and $\lambda_i^{(k)}$ for $k= 0,  1,  2$, belong to the space $\mathtt{L}^\infty(\R)$. 
	\end{itemize} 
    \end{itemize}

   Note that if $a_i$, $b_i$, 	$c_i$, and $\lambda_i$ are constants, then all of these
	hypotheses are satisfied.
\begin{itemize} \label{H1}
	\item[]$(H2)$: 
	The function $y_i$ is non-negative and satisfies the following properties: 
	\begin{itemize} 
		\item [(i)] It is twice differentiable with respect to $x$ and differentiable with respect
		to $t$ for all $(x, t) \in \Omega_T$. Moreover, $y_i$, ${(y_i)}_x$,
		${(y_i)}_{xx}$, and $ {(y_i)}_t$ belong to $\mathtt{L}^\infty(\Omega_T)$, and $\|y_i\|_{\mathtt{L}^\infty(\Omega_T)} \leq k_3$, where $k_3$ is a positive constant.
		\item [(ii)] $t \mapsto {(y_i)}_t$ is integrable on $[0, T]$ for all
		$x \in \R$. 
		\item [(iii)] ${(y_i)}_t$ is twice differentiable with
		respect to $x$, $ {(y_i)}_{tx} \in \mathtt{L}^\infty(\Omega_T)$
		and $x \mapsto (y_i)_{txx} \in \mathtt{L}^2(\R)$ for all $t \in
		[0, T]$. 
	\end{itemize}
\end{itemize}
	The following lemma was proved in \cite{Alarcon}, so we will omit its proof here.

\begin{lemma}\label{existence-Ui}
	Assuming that $(H1)$ and $(H2)$ are satisfied, a unique family of semigroup evolution operators
	$U_i(t, t')$ associated with $G_i$ exists and is defined in a triangular domain as follows:
	$$
	(t,\,t^\prime) \in \triangle = \left\{(t,\,t^\prime) \in \R^2 \colon 0 \leq t^\prime \leq t \leq
	T\right\}\longmapsto U_i(t,\,t^\prime) \in \mathcal{B}\left(\mathtt{L}^2(\R)\right) .
	$$
	The operators satisfy the following properties:
	
	\begin{itemize}
		\item [(i)] $(t,\,t^\prime) \longmapsto U_i(t,\,t^\prime) \in \mathcal{B}\left(\mathtt{L}^2(\R)\right)$ is strongly continuous
		and $U_i(t,\,t) = I$ for all $t \in [0,\,T]$;
		\item [(ii)] $U_i(t,\,t^{\prime\prime})= U_i(t,\,t^\prime)\,U_i(t^\prime,\,t^{\prime\prime})$ for all $t$, $t^\prime$,
		$t^{\prime\prime}$ such that $ 0 \leq t^{\prime\prime} \leq t^\prime \leq t \leq T$;
		\item [(iii)] $U_i(t,\,t^\prime)\left(\mathtt{H}^2(\R)\right) \subset \mathtt{H}^2(\R)$ and $(t,\,t^\prime) \longmapsto
		U_i(t,\,t^\prime) \in \mathcal{B}\left(\mathtt{H}^2(\R)\right)$ is strongly continuous in $\mathtt{H}^2(\R)$.
    \end{itemize}
\end{lemma}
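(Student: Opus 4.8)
The plan is to place Lemma~\ref{existence-Ui} within the two-space framework of Kato's theory of linear evolution equations of hyperbolic type, with $X = \mathtt{L}^2(\R)$ and $Y = \mathtt{H}^2(\R)$ (the dense, continuously embedded subspace $Y \subset X$ introduced in Section~\ref{notation}). In this framework one seeks an evolution system $U_i(t,t')$ for the family $\{G_i(t)\}_{t\in[0,T]}$, and Kato's construction produces exactly the three properties asserted: strong continuity on $X$ together with $U_i(t,t)=I$ (property (i)), the propagator identity (property (ii)), and the invariance $U_i(t,t')Y \subset Y$ together with strong continuity on $Y$ (property (iii)). Thus the whole lemma reduces to verifying Kato's three structural hypotheses for this particular family, each of which is encoded in $(H1)$ and $(H2)$.

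First I would check that each $-G_i(t)$ generates a $C_0$-semigroup on $\mathtt{L}^2(\R)$ and that the family is stable with constants independent of $t$. By $(H1)(i)$ and the boundedness of $y_i$ in $(H2)(i)$, the coefficient $\alpha_i(t,x)=\lambda_i/(a_i+b_i y_i)$ is bounded above and below by positive constants, so $G_i(t)$ is uniformly elliptic with bounded coefficients. An integration-by-parts (energy) estimate then shows that $\mathrm{Re}\,\langle G_i(t)\psi,\psi\rangle \ge \delta\|\psi'\|^2 - \omega\|\psi\|^2$ for constants $\delta>0$, $\omega\ge 0$ that do not depend on $t$, where the lower-order cross terms are absorbed using the $\mathtt{L}^\infty$ bounds on $\alpha_i'$ and $\beta_i$ supplied by $(H1)(ii)$ and $(H2)(i)$. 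Hence $G_i(t)+\omega$ is accretive, each $-G_i(t)$ generates a quasi-contraction semigroup with a uniform bound, and the family is stable in $X$; this is Kato's first hypothesis.

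Next I would verify Kato's regularity condition by showing that $t \mapsto G_i(t)$ is Lipschitz continuous as a map into $\mathcal{B}(Y,X)=\mathcal{B}(\mathtt{H}^2(\R),\mathtt{L}^2(\R))$. Writing $(G_i(t)-G_i(s))\psi = -(\alpha_i(t)-\alpha_i(s))\psi'' + (\beta_i(t)-\beta_i(s))\psi'$ and bounding the $\mathtt{L}^2$-norm by $\big(\|\alpha_i(t)-\alpha_i(s)\|_\infty + \|\beta_i(t)-\beta_i(s)\|_\infty\big)\|\psi\|_{\mathtt{H}^2}$, the claim follows once $t\mapsto\alpha_i(t,\cdot)$ and $t\mapsto\beta_i(t,\cdot)$ are Lipschitz into $\mathtt{L}^\infty(\R)$; this in turn is a direct consequence of the bound on $(y_i)_t$ in $(H2)(i)$ together with the mean value theorem applied to the explicit quotients in \eqref{coeficients}.

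The crux, and the step I expect to be the main obstacle, is Kato's intertwining (admissibility) condition, which secures the invariance of $Y$ and the $\mathtt{H}^2$-continuity of property (iii). Here I would fix the isomorphism $S = I - \partial_{xx}\colon \mathtt{H}^2(\R)\to\mathtt{L}^2(\R)$ and study the commutator $B_i(t) := S\,G_i(t)\,S^{-1} - G_i(t)$, showing that $B_i(t)\in\mathcal{B}(\mathtt{L}^2(\R))$ with $t\mapsto B_i(t)$ strongly continuous. Because $S^{-1}$ smooths by two orders, the apparently third-order terms in $S G_i(t) - G_i(t) S$ collapse, after composition with $S^{-1}$, into a bounded operator whose coefficients are the first- and second-order $x$-derivatives of $\alpha_i$ and $\beta_i$. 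Controlling these derivatives is exactly what forces the two-derivative assumptions on $a_i,b_i,c_i,\lambda_i$ in $(H1)(ii)$ and on $y_i$ in $(H2)(i)$, while the strong continuity of $B_i(t)$ in $t$ is what requires the higher mixed regularity $(y_i)_{tx},(y_i)_{txx}$ in $(H2)(iii)$. Once this commutator estimate is in place, the family is stable in $Y$ as well, all three of Kato's hypotheses hold, and his theorem delivers the unique evolution operator $U_i(t,t')$ with properties (i) through (iii); the full details are carried out in \cite{Alarcon}.
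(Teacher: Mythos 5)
Your proposal cannot be compared line-by-line with an in-paper argument, because the paper gives none: it states ``The following lemma was proved in \cite{Alarcon}, so we will omit its proof here.'' That said, your framing matches the construction the paper is leaning on: Kato's two-space theory with $X=\mathtt{L}^2(\R)$, $Y=\mathtt{H}^2(\R)$, whose conclusions are exactly properties (i)--(iii). Your first two steps are essentially right. The uniform bounds $k_1/(k_2(1+k_3))\leq \alpha_i \leq k_2/k_1$ and the $\mathtt{L}^\infty$ bounds on $(\alpha_i)_x,\beta_i$ give the Gårding-type estimate and quasi-accretivity with $t$-independent constants (generation also needs the range condition $R(G_i(t)+\lambda)=\mathtt{L}^2(\R)$, which you should note follows from Lax--Milgram plus elliptic regularity), and the Lipschitz continuity of $t\mapsto G_i(t)$ in $\mathcal{B}(\mathtt{H}^2,\mathtt{L}^2)$ does follow from the $\mathtt{L}^\infty$ bound on $(y_i)_t$ via the mean value theorem applied to \eqref{coeficients}.

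The genuine gap is your third step, which you correctly identify as the crux but resolve incorrectly. With $S=I-\partial_{xx}$ one has $B_i(t)=[S,G_i(t)]S^{-1}$, and a direct computation gives $[S,G_i(t)]\psi = 2(\alpha_i)_x\psi''' + \big((\alpha_i)_{xx}-2(\beta_i)_x\big)\psi'' - (\beta_i)_{xx}\psi'$. The third-order term does \emph{not} collapse: $\partial_x^3(I-\partial_{xx})^{-1}$ is the Fourier multiplier with symbol $-i\xi^3/(1+\xi^2)$, which grows like $|\xi|$, so $B_i(t)$ contains the genuinely first-order, $\mathtt{L}^2$-unbounded piece $2(\alpha_i)_x\,\partial_x^3(I-\partial_{xx})^{-1}$ whenever $(\alpha_i)_x\not\equiv 0$. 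Your heuristic ``third order composed with two orders of smoothing is bounded'' is off by one order, and nonconstant-in-$x$ coefficients are precisely the point of this paper, since $\alpha_i=\lambda_i/(a_i+b_i y_i)$ with $a_i,b_i,\lambda_i,y_i$ all $x$-dependent; so the commutator form of Kato's admissibility condition, with this fixed $S$, is simply false here. The repair consistent with the hypotheses is Kato's alternative sufficient condition: prove stability of $\{G_i(t)\}$ directly in $Y=\mathtt{H}^2(\R)$, by running your step-one energy estimate on $\psi$, $\psi'$ and $\psi''$ (differentiating the operator produces commutator terms involving only first and second $x$-derivatives of $\alpha_i,\beta_i$, all in $\mathtt{L}^\infty$ by $(H1)$(ii) and $(H2)$(i)), with an equivalent $\mathtt{H}^2$ inner product making the quasi-contraction constants uniform in $t$; alternatively, exploit uniform parabolicity and use the parabolic (Kato--Tanabe/Sobolevskii) evolution-family theory rather than the hyperbolic-type commutator machinery. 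Relatedly, your attribution of the hypotheses $(y_i)_{tx}\in\mathtt{L}^\infty$, $(y_i)_{txx}\in\mathtt{L}^2$ to strong continuity of your $B_i(t)$ is moot once $B_i(t)$ is abandoned; they enter instead through the finer time-regularity of the family needed in the $\mathtt{H}^2$ analysis of \cite{Alarcon}.
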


Using Lemma \ref{existence-Ui}, it can describe the evolution operator associated with $G(t)$, as in the following lemma:

\begin{lemma}\label{existence-U}
	Assuming that $(H1)$ and $(H2)$ are satisfied, there exists a unique family of 
	evolution operators $U(t, t')$ associated with $G$ such that
	\begin{align*} 
	(t,\,t^\prime) \in \triangle & = \left\{(t,\,t^\prime) \in \R^2 \colon 0 \leq t^\prime \leq t \leq
	T\right\}\longmapsto U(t,\,t^\prime) \in \mathcal{B}(\mathtt{L}^2(\R)^n),
	\end{align*}
where $U(t,t')=(U_1(t,t'),\ldots,U_n(t,t'))$ and $U_i(t,t')$, for $i = 1, \ldots, n$, are given by Lemma \ref{existence-Ui}.
	The operator $U(t,\,t^\prime)$ satisfy the following properties:
	\begin{itemize}
		\item [(i)] $(t,\,t^\prime) \longmapsto U(t,\,t^\prime) \in \mathcal{B}(\mathtt{L}^2(\R)^n)$ is strongly continuous
		and $U(t,\,t) = I$ for all $t \in [0,\,T]$;
		\item [(ii)] $U(t,\,t^{\prime\prime})= U(t,\,t^\prime)\,U(t^\prime,\,t^{\prime\prime})$ for all $t$, $t^\prime$,
		$t^{\prime\prime}$ such that $ 0 \leq t^{\prime\prime} \leq t^\prime \leq t \leq T$.
		\item [(iii)] There exists $\beta>0$ such that 
		$$\|U(t,t')\psi\|\leq e^{\beta t}\|\psi\|,$$
		where $\psi \in \mathtt{L}^2(\R)^n$ and $t,t'\in \triangle.$
	\end{itemize}
\end{lemma}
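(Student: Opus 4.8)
The plan is to build $U(t,t')$ diagonally from the scalar evolution operators supplied by Lemma~\ref{existence-Ui}, and then to transfer each of the three properties componentwise, exploiting the max-norm structure $\|\psi\|=\max_i\|\psi_i\|$ of $\mathtt{L}^2(\R)^n$. Concretely, for $\psi=(\psi_1,\ldots,\psi_n)\in\mathtt{L}^2(\R)^n$ and $(t,t')\in\triangle$, I would set
\[
U(t,t')\psi:=\big(U_1(t,t')\psi_1,\ldots,U_n(t,t')\psi_n\big).
\]
Since each $U_i(t,t')\in\mathcal{B}\left(\mathtt{L}^2(\R)\right)$ by Lemma~\ref{existence-Ui}, this defines a linear map, and the max-norm immediately gives $U(t,t')\in\mathcal{B}(\mathtt{L}^2(\R)^n)$ with operator norm $\max_i\|U_i(t,t')\|_{\mathcal{B}(\mathtt{L}^2(\R))}$. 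Uniqueness of the family is inherited directly from the uniqueness of each $U_i$ asserted in Lemma~\ref{existence-Ui}.

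Next I would verify (i) and (ii). The identity $U(t,t)=I$ and the evolution (cocycle) identity $U(t,t'')=U(t,t')\,U(t',t'')$ hold coordinatewise by Lemma~\ref{existence-Ui}(i)--(ii), hence hold for the product operator. Strong continuity reduces to strong continuity of each coordinate: for $(t_k,t_k')\to(t,t')$ in $\triangle$, the identity $\|U(t_k,t_k')\psi-U(t,t')\psi\|=\max_i\|U_i(t_k,t_k')\psi_i-U_i(t,t')\psi_i\|$ together with Lemma~\ref{existence-Ui}(i) yields the claim, the maximum over the finitely many indices $i$ preserving convergence.

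The step requiring the most care is the exponential bound (iii), since it is not among the properties of the scalar operators $U_i$ listed in Lemma~\ref{existence-Ui}. I would recover it from the stability estimate underlying Kato's construction of $U_i$: the family $\{-G_i(t)\}$ is a stable family of generators in $\mathtt{L}^2(\R)$ under $(H1)$--$(H2)$, so the associated scalar evolution operator obeys a Hille--Yosida--type bound $\|U_i(t,t')\psi_i\|\le e^{\omega_i(t-t')}\|\psi_i\|\le e^{\omega_i t}\|\psi_i\|$ for some $\omega_i>0$ depending only on the coefficients of $G_i$, where the last inequality uses $t'\ge 0$. Setting $\beta:=\max_{1\le i\le n}\omega_i$ and taking the maximum over $i$ then gives
\[
\|U(t,t')\psi\|=\max_i\|U_i(t,t')\psi_i\|\le\max_i e^{\omega_i t}\|\psi_i\|\le e^{\beta t}\max_i\|\psi_i\|=e^{\beta t}\|\psi\|,
\]
which is precisely (iii). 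I expect the only genuine obstacle to be justifying this scalar bound with a constant independent of $t'$ and uniform across the layers, so that the finite maximum absorbs all $\omega_i$ into a single $\beta$; once that is in hand, everything else is a direct coordinatewise transcription from Lemma~\ref{existence-Ui}.
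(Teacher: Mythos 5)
Your proposal is correct and follows essentially the same route the paper intends: the lemma's statement itself fixes the diagonal construction $U(t,t')=(U_1(t,t'),\ldots,U_n(t,t'))$, the paper defers the detailed verification to the cited reference [Alarcon], and that verification is exactly your componentwise transfer of (i)--(ii) under the max-norm together with the quasi-contractive bound $\|U_i(t,t')\psi_i\|\le e^{\omega_i(t-t')}\|\psi_i\|$ from Kato's stability theory (each $-G_i(t)$ being uniformly quasi-accretive under $(H1)$--$(H2)$), absorbed into a single $\beta=\max_i\omega_i$. You also correctly isolated the one point needing care, namely that (iii) is not listed in Lemma~\ref{existence-Ui} and must be recovered from the stability estimate underlying the scalar construction.
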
			 
The proof of the last Lemma can be founded in \cite{Alarcon}.
Then, the mild solution to problem \eqref{combustion-problem3} is given  by the following equation:

\begin{align}\label{Aintegral-form1}
u(t) = U(t,\,0) \phi + \int_0^t\,U(t,\,\tau)\,f\big(\tau,\,u(\tau)\big)\,d\tau, \,\,\,\, t \in [0, T].
\end{align}

\section{Local Solution}\label{local-solution}
In this section, we will describe the technique employed to obtain the local solution. Here, we will adopt the ideas of Kato, which can be found in \cite{Kato4} and \cite{Kato5}. See also \cite{Cunha}.

Next lemma shows that the source function $f$ satisfies the necessary properties for the existence of the local solution. Its proof requires the following additional hypothesis:

\begin{itemize}\label{H3}
\item[] $(H3)$: The functions $d_i$, $q_i$, $K_i$, are in $\mathtt{L}^\infty(\R)$, and $\hat{q}_1$, $\hat{q}_2$, are in $\mathtt{L}^2(\R)$ . 
\end{itemize}

\begin{lemma} \label{properties-fi} 
	Let $(H1)$, $(H2)$, and $(H3)$ be satisfied. Given an open ball $W \subset \mathtt{L}^2(\R)^n$, centered at origin, with  radius $0 <\rho <  \infty$, 
	then the source function $f = (f_1, \ldots, f_n):[0, T] \times W \to \mathtt{L}^2(\R)^n$, for any fixed $T >0$, satisfies the following properties: 
	\begin{itemize} 
		\item[(i)]  There exists a constant $\mu>0$, which depends on the radius of $W$ but not on $t$, such that $\|f(t, \,w)\|\leq \mu $ for all 
		$t \in [0,\,T]$ and $w \in W$.
		\item[(ii)] For a fixed $w \in W$, the function $t\in [0, T] \mapsto f(t, \,w)\in \mathtt{L}^2(\R)^n$ is continuous.
		\item[(iii)] For each $t \in [0,\,T]$, the function $w \mapsto f(t, \,w)$ is Lipschitz in $\mathtt{L}^2(\R)^n$, meaning that:
		$$
		\| f(t, \,v) - f(t, \,w)\| \leq \kappa\, \| v - w\|, 
		$$ 
		for all  $v,w \in \mathtt{L}^2(\R)^n$, where the Lipschitz constant $\kappa$ 
		does not depend on $t$.
	\end{itemize}
\end{lemma}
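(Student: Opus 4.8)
The plan is to reduce all three claims to elementary pointwise estimates, exploiting that every term of each $f_i$ is a product of a uniformly bounded multiplier and a factor that lives in $\mathtt{L}^2(\R)$. The common starting point is the lower bound on the denominators: since $y_i \ge 0$ and $a_i(x)\ge k_1>0$ by $(H1)(i)$, one has $0<\frac{1}{a_i+b_iy_i}\le \frac{1}{k_1}$ for all $(x,t)\in\Omega_T$. Combining this with the $\mathtt{L}^\infty$-bounds on $a_i,b_i,(c_i)_x,d_i,q_i,K_i$ (from $(H1)$ and $(H3)$), the bound $\|y_i\|_{\mathtt{L}^\infty(\Omega_T)}\le k_3$ from $(H2)(i)$, and the Arrhenius structure of $g$, I would record once and for all the two scalar facts that do the work: $g$ is bounded with $g(0)=0$ and globally Lipschitz, so $|g(s)|\le L_g|s|$ and $|g(s)-g(s')|\le L_g|s-s'|$, while $h(s):=s\,g(s)$ is globally Lipschitz, $|h(s)-h(s')|\le L_h|s-s'|$ (its derivative $g(s)+s\,g'(s)$ stays bounded on $\R$). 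Crucially, every estimate coming from $y_i$ is uniform in $t$, since $(H2)(i)$ gives $\mathtt{L}^\infty(\Omega_T)$ control.

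For property $(i)$ I would bound $\|f_i(t,w)\|$ term by term for $w\in W$. The terms linear in a single component $w_j$ — the $(c_i)_x$ term, the inter-layer coupling terms carrying $q_{i-1},q_i$, and the $u_i$-proportional part of the environment term — are each of the form (bounded multiplier)$\times w_j$, hence bounded in $\mathtt{L}^2$ by $C\|w_j\|\le C\rho$. The reaction term $\frac{(K_ib_iu_i+d_i)y_i\,g(u_i)}{a_i+b_iy_i}$ I would split into $\frac{K_ib_iy_i}{a_i+b_iy_i}\,u_i\,g(u_i)$ and $\frac{d_iy_i}{a_i+b_iy_i}\,g(u_i)$; the first is controlled by $\frac{k_2k_3}{k_1}\|K_i\|_{\infty}\|g\|_{\infty}\rho$ using $|u_i\,g(u_i)|\le \|g\|_{\infty}|u_i|$ pointwise, and the second by $\frac{k_2k_3}{k_1}\|d_i\|_{\infty}L_g\rho$ using $|g(u_i)|\le L_g|u_i|$. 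The genuinely inhomogeneous piece $\frac{\hat q_i\,u_e}{a_i+b_iy_i}$ lies in $\mathtt{L}^2$ with norm $\le \frac{|u_e|}{k_1}\|\hat q_i\|$, which is exactly why $(H3)$ asks $\hat q_i\in\mathtt{L}^2(\R)$. Summing the finitely many terms and taking the maximum over $i$ gives $\mu=\mu(\rho)$ independent of $t$.

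For property $(ii)$ the key observation is that, for fixed $w$, the only $t$-dependence of $f_i$ enters through $y_i(\cdot,t)$. I would write $f_i(t,w)-f_i(s,w)$ as a sum of differences of the form $\big[R(y_i(x,t))-R(y_i(x,s))\big]\cdot(\text{fixed }\mathtt{L}^2\text{ factor})$, where each $R$ is a smooth rational function of $y$ with denominator bounded below, hence Lipschitz on the range $[0,k_3]$; together with $|y_i(x,t)-y_i(x,s)|\le \|(y_i)_t\|_{\mathtt{L}^\infty(\Omega_T)}\,|t-s|$ from $(H2)$ this yields $\|f_i(t,w)-f_i(s,w)\|\le C\,|t-s|\to 0$, with dominated convergence making the passage rigorous. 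For property $(iii)$ I would argue term by term again, estimating $\|f_i(t,v)-f_i(t,w)\|$ for arbitrary $v,w\in\mathtt{L}^2(\R)^n$: the linear, coupling, and environment terms contribute multiples of $\|v_j-w_j\|$ with constants built from the $\mathtt{L}^\infty$-bounds and $\frac1{k_1}$, while the reaction term is dispatched by the two scalar Lipschitz facts, giving $\big\|\frac{K_ib_iy_i}{a_i+b_iy_i}(v_i g(v_i)-w_i g(w_i))\big\|\le C\,L_h\|v_i-w_i\|$ and $\big\|\frac{d_iy_i}{a_i+b_iy_i}(g(v_i)-g(w_i))\big\|\le C\,L_g\|v_i-w_i\|$. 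All constants are uniform in $t$ via the $\mathtt{L}^\infty(\Omega_T)$ bounds on $y_i$, so summing and maximizing over $i$ produces a single $t$-independent $\kappa$.

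The main obstacle is the \emph{global} Lipschitz property in $(iii)$ — over all of $\mathtt{L}^2(\R)^n$, not merely $W$ — of the reaction term, which forces one to verify that both $s\mapsto g(s)$ and $s\mapsto s\,g(s)$ are globally Lipschitz on $\R$; here the specific Arrhenius form of $g$ is essential, since $g(0)=0$ supplies the $\mathtt{L}^2$-membership of the $d_i$-piece while boundedness of $g+s\,g'$ tames $s\,g(s)$, which is itself unbounded. A second delicate point is the environment term: the constant $u_e$-contribution needs $\hat q_i\in\mathtt{L}^2(\R)$ to land in $\mathtt{L}^2$, whereas the $u_i$-proportional part is a multiplication operator that must be controlled through the boundedness of $\hat q_i/(a_i+b_iy_i)$, so that it remains bounded from $\mathtt{L}^2(\R)$ into itself with a $t$-independent operator norm.
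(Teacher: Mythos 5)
Your proposal is correct and follows essentially the same route as the paper's proof: a term-by-term decomposition of each $f_i$ into uniformly bounded multipliers (the paper names them $\gamma,\delta,\sigma,\epsilon,\omega$ and bounds them via $k_1,k_2,k_3$ exactly as you do) times $\mathtt{L}^2$ factors, with the nonlinearity handled through the two scalar facts that $g$ and $s\mapsto s\,g(s)$ are globally Lipschitz. Three points of comparison are worth recording. First, where the paper invokes the mean value theorem to write $g(v_1)v_1-g(w_1)w_1=\big(g'(\theta)\theta+g(\theta)\big)(v_1-w_1)$ and then displays a Lipschitz constant containing the term $\rho\,\|g'\|_{\mathtt{L}^\infty}$ --- which tacitly treats the intermediate value $\theta$ as if it were bounded by $\rho$, something an $\mathtt{L}^2$ bound on $v_1,w_1$ does not provide pointwise --- your observation that $g(s)+s\,g'(s)$ is bounded on all of $\R$ (indeed $s\,g'(s)=\frac{E}{s}e^{-E/s}\le e^{-1}$) is precisely what makes the estimate valid for arbitrary $v,w\in\mathtt{L}^2(\R)^n$, as claim (iii) requires; your version is the correct and cleaner form of this step. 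Second, your treatment of the environment term mirrors the paper's, including its implicit assumption: controlling the $u_1$-proportional part needs $\hat q_1/(a_1+b_1y_1)\in\mathtt{L}^\infty$, and the paper's constant indeed depends on $\|\hat q_1\|_{\mathtt{L}^\infty}$ even though $(H3)$ as stated only places $\hat q_1,\hat q_2$ in $\mathtt{L}^2(\R)$; so both you and the paper are in effect using $\hat q_i\in\mathtt{L}^2\cap\mathtt{L}^\infty$, with the $\mathtt{L}^2$ half serving, as you correctly note, to put the constant $u_e$-contribution in $\mathtt{L}^2$ for claim (i). Third, you supply full arguments for (i) and (ii), which the paper merely asserts as ``similar'' and ``a consequence of the continuity of $y_i$''; your use of $(y_i)_t\in\mathtt{L}^\infty(\Omega_T)$ from $(H2)$ to get Lipschitz dependence of the rational multipliers on $t$ is a sound and even slightly stronger way to fill in (ii).
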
	

\begin{proof}
We will only prove the property (iii) here. The proof of (i) is similar to the proof of (iii). The proof of (ii) is a consequence of the  continuity of $y_i$ and the definition of $f_i$. 

Proof of (iii): We will give the proof for $f_1$, for 
$f_2,\ldots, f_n$ the proofs follow in a similar way. 
It is sufficient to prove that, for each $t \in [0, T]$, the function
$w \mapsto f_1(t, \,w)$  is Lipschitz continuous, that is,
$$\| f_1(t, \,v) - f_1(t_, \,w)\| \leq \kappa_1\, \| v - w\|,$$
for all  $w = (w_1, \dots, w_n)$, $v = (v_1, \dots, v_n) \in \mathtt{L}^2(\R)^n$, where the Lipschitz constant $\kappa_1$ does not depend on $t$. In this case, $\kappa:= \max\{\kappa_i, \,\, i= 1, \ldots, n \}$. 

We define the following functions:
$\gamma(x,t)=\frac{(c_1)_x}{a_1+b_1y_1}$,
$\delta(x,t)=\frac{K_1 b_1 y_1}{a_1+b_1 y_1}$,
$\sigma(x,t)=\frac{d_1 y_1}{a_1+b_1 y_1}$,
$\epsilon(x,t)=\frac{q_1}{a_1+b_1 y_1}$, and $\omega(x,t)=-
\frac{\hat{q}_1}{a_1+b_1y_1}$. 

From hypotheses $(H1)$, $(H2)$, and $(H3)$ we have 
$$\|\gamma\|_{\mathtt{L}^\infty}, \; \|\delta\|_{\mathtt{L}^\infty},  \; \|\sigma\|_{\mathtt{L}^\infty},  \; \|\epsilon\|_{\mathtt{L}^\infty},  \; 
\|\omega\|_{\mathtt{L}^\infty} \leq const,$$ 
where $const$ depends on $k_1, \, k_2, \, \|d_1\|_{\mathtt{L}^\infty},\, \|q_1\|_{\mathtt{L}^\infty}, \, \|K_1\|_{\mathtt{L}^\infty}, \, 
\|(c_1)_x\|_{\mathtt{L}^\infty}$ and \break $\|\hat{q}_{1}\|_{_{\mathtt{L}^\infty}}$ but does not depend on $t$.

Thus, using the definition of $f_1$ given in \eqref{reaction-function-1}, we have
\begin{equation}
	\begin{split}\label{f1uv}
		f_1(x,t,v)-f_1(x,t,w)=& \; \gamma (v_1-w_1)+\delta \big(g(v_1)v_1 - g(w_1)w_1\big)\\
		&+\sigma (g(v_1)-g(w_1))+\epsilon (v_2-w_2)\\
		&+(\epsilon+\omega)(w_1-v_1).                     
	\end{split}
\end{equation}
For each $t\in [0,T]$, from the definition of $g$ (see \eqref{darcy}) and Mean value theorem it follows that there exists $\theta$ between $v_1$ and $w_1$ such that

\begin{equation*} 
	\|g(v_1)v_1 - g(w_1)w_1\| = \|(g'(\theta)\theta + g(\theta))(v_1 - w_1)\|. \quad 
\end{equation*}

Then, we have
\begin{equation} \label{gu1}
\|g(v_1)v_1 - g(w_1)w_1\| \leq const_1\, \|v_1 - w_1\|,
\end{equation}
where $const_1$ is a constant independent on $t$.
Hence, \eqref{f1uv} and \eqref{gu1} imply that

\begin{equation}
	\begin{split}\label{f1uvest}
		\|f_1(x,t,v)&-f_1(x,t,w)\| \leq \; \|\gamma\|_{\mathtt{L}^\infty} \|v_1-w_1\| +\\
		&+ \|\delta\|_{\mathtt{L}^\infty} const_1\, \|v_1 - w_1\| \\
		&+\|\sigma\|_{\mathtt{L}^\infty} \|g'\|_{\mathtt{L}^\infty} \|v_1 - w_1\| +\|\epsilon\|_{\mathtt{L}^\infty} \|v_2 - w_2\| \\
		&+(\|\epsilon\|_{\mathtt{L}^\infty}+\|\omega\|_{\mathtt{L}^\infty})\|v_1 - w_1\| \\
		\leq & \: \underbrace{\big((\rho\|g'\|_{\mathtt{L}^\infty}+\|g\|_{\mathtt{L}^\infty})+c_1 \|g'\|_{\mathtt{L}^\infty}+4c_1\big)}_{\kappa_1}\|v-w\|,         
	\end{split}
\end{equation}
where $\kappa_1$ is independent on $t$, which completes the proof.

\end{proof}

\subsection{Proof of Theorem \ref{combustion-local-theo}}\label{proof-teo-local} 
\begin{proof}	
Assume $T > 0$, and $W \subset \mathtt{L}^2(\R)^n $ be an open ball centered at the origin, as defined in Lemma~\ref{properties-fi}, such that $\phi$ belongs to $W$. 

Now, consider a number $R > \rho e^{\beta T}$, and define an open ball $W' \subset \mathtt{L}^2(\R)^n $ centered at the origin, with radius $R$. It is worth noting that $W'$ includes $W$.  

Let us consider the space
\begin{align} \label{ET}
	E_{T}=\{u\in C([0, T], \,\mathtt{L}^2(\R)^n): \,\, &\sup_{[0,T]}\|u(t)\|\leq R\},
\end{align}
which is not empty, because by Lemma \ref{existence-U} (iii), $g(t)\equiv U(t,0)\phi \in E_{T} $.

	It is not difficult to prove that $E_{T}$ is a complete metric space with 
\begin{equation*}
	d(u,v)=\sup_{[0,T]}\|u(t)-v(t)\|. 
\end{equation*}

We define the function
\begin{equation}\label{function-contraction}
\Phi u (t)=U(t,0)\phi+\int_0^t  U(t,\tau)f(u(\tau))d\tau, \quad t\in [0,T],
\end{equation}
and our goal is to prove that there exists $0< T' \leq T$ such that $\Phi$ is a contraction in $E_{T'}$.

First, if $u\in E_{T}$, from Lemmas \ref{existence-U}, \ref{properties-fi}, and Lebesgue dominated convergence 
theorem, we conclude that $\Phi u \in C([0, T], \,\mathtt{L}^2(\R)^n)$.
Taking the $\mathtt{L}^2(\R)^n$-norm in \eqref{function-contraction}, and using Lemmas 
\ref{existence-U} (iii), and \ref{properties-fi} (i), for all  $0 \leq t  \leq T'$, we obtain

\begin{equation}\label{contE2}
\begin{split}
\|\Phi u (t)\| \leq e^{\beta T}\|\phi\| + e^{\beta T}\int_0^t \|f(u(\tau))\| d\tau \leq e^{\beta T}(\rho+T'\mu). 
\end{split}
\end{equation}
We also have that,
\begin{equation}\label{cont}
\begin{split}
\|\Phi u (t)-\Phi v (t)\| &\leq \int_0^t \|U(t,\tau)(f(u(\tau))-
f(v(\tau)))\| d\tau\\
&\leq e^{\beta T} \int_0^t \|f(\cdot,\tau,u(\tau))-f(\cdot,\tau,v(\tau))\| d\tau\\
& \leq  \kappa e^{\beta T}\int_0^t \|u(\tau)-v(\tau)\| d\tau\\
& \leq T' \kappa e^{\beta T} d(u,v).
\end{split}
\end{equation}
Now, by choosing 
\begin{equation}
0<T'<\min\Big \{T, \frac{1}{\kappa e^{\beta T}}, \frac{1}{\mu}\Big(\frac{R}{e^{\beta T}}-\rho\Big)\Big\},
\end{equation}  
it follows from inequalities  \eqref{contE2} 
and \eqref{cont} that $\Phi : E_{T'}\to E_{T'}$ is a contraction. Therefore, from the Banach fixed-point theorem, the initial value problem \eqref{combustion-problem3} has a unique mild solution $u$ with $u(0) = \phi$, which can be expressed as \eqref{Aintegral-form1}. 

It remains to prove the uniqueness.
	
	Let $u$ and $v$ be solutions of \eqref{Aintegral-form1} such that $u(0)=v(0)=\phi$. For any $t\in [0,T']$, it follows from  Lemmas \ref{existence-U} (iii) and \ref{properties-fi} (iii) that:
	\begin{equation*}
		\begin{split}
			\|u(t)-v(t)\| &\leq \int_0^t \|U(t,\tau)(f(u(\tau))-f(v(\tau)))\| d\tau\\
			&\leq e^{\beta T'}\int_0^t \|f(\cdot,\tau,u(\tau))-f(\cdot,\tau,v(\tau))\| d\tau\\
			&\leq \kappa e^{\beta T'}\int_0^t \|u(\tau)-v(\tau)\| d\tau.
		\end{split}
	\end{equation*} 
	Therefore, according to Gronwall's lemma $u(t)=v(t)$, which completes the proof of uniqueness.
\end{proof}

\section{Global Solution}\label{global-solution}

This section aims to prove Theorem \ref{combustion-global-theo}.  The arguments presented here are inspired by the theory of
global existence contained in \cite{pazy}. First, we consider the  auxiliary problem 
\begin{equation}\label{Acomb-globalt0}
\left\{
\begin{array}{l}
u_t + G(t)u = f(t, u), \,\,\, t>t_0, \\
u(t_0) = \phi,\\
\end{array}
\right.
\end{equation}
for $t_0\geq 0$.

We will require the following result.

\begin{lemma}\label{global-pazy}
If $(H1)$, $(H2)$, and $(H3)$ are satisfied, where $\Omega_T$ is replaced by $\Omega$ in $(H2)$, and $\phi \in \mathtt{L}^2(\R)^n$, then there exists $t_{\mathrm{max}}\leq \infty$ such that the initial value problem~ \eqref{combustion-problem3},
has a unique mild solution $u$ defined on $[0,t_{\mathrm{max}})$.

Furthermore, if $t_{\mathrm{max}}<\infty$ then $$\lim_{t\uparrow t_{\mathrm{max}}}\|u(t)\|=\infty.$$

\end{lemma}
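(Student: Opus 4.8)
The plan is to prove Lemma~\ref{global-pazy} by the standard continuation argument for mild solutions, following the structure in \cite{pazy}. Theorem~\ref{combustion-local-theo} already supplies a unique local mild solution on some interval $[0, T']$; the work here is to extend it to a maximal interval $[0, t_{\mathrm{max}})$ and to establish the blow-up alternative.

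First I would define $t_{\mathrm{max}}$ as the supremum of all $T > 0$ for which \eqref{combustion-problem3} admits a (necessarily unique) mild solution on $[0, T]$. The set of such $T$ is nonempty by Theorem~\ref{combustion-local-theo} and is clearly an interval containing $0$, so $t_{\mathrm{max}} \in (0, \infty]$ is well defined. Uniqueness on $[0, t_{\mathrm{max}})$ follows from the local uniqueness already proved (via Gronwall's lemma in Section~\ref{proof-teo-local}): if two solutions agreed on $[0, s]$ and were defined slightly beyond, the local uniqueness applied to the shifted problem \eqref{Acomb-globalt0} with initial time $s$ would force them to coincide on a neighborhood of $s$, so the set where they agree is open and closed in $[0, t_{\mathrm{max}})$.

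Next I would prove the blow-up alternative by contraposition: assume $t_{\mathrm{max}} < \infty$ but that $\|u(t)\|$ does not tend to $\infty$ as $t \uparrow t_{\mathrm{max}}$. Then there is a sequence $t_k \uparrow t_{\mathrm{max}}$ and a constant $M$ with $\|u(t_k)\| \leq M$. The key step is to show that under this boundedness, $u(t)$ actually has a limit in $\mathtt{L}^2(\R)^n$ as $t \uparrow t_{\mathrm{max}}$. Using the integral representation \eqref{Aintegral-form1}, the evolution-operator bound $\|U(t,\tau)\psi\| \leq e^{\beta t}\|\psi\|$ from Lemma~\ref{existence-U}(iii), and the uniform bound on $f$ from Lemma~\ref{properties-fi}(i) (valid once the solution is confined to a fixed ball), I would estimate $\|u(t) - u(s)\|$ for $s < t$ close to $t_{\mathrm{max}}$ by splitting the difference into the evolution-operator part $\|(U(t,0) - U(s,0))\phi\|$, which is controlled by strong continuity (Lemma~\ref{existence-U}(i)), and the integral remainder $\int_s^t$, which is $O(t-s)$ by the uniform $f$-bound. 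This shows $\{u(t)\}$ is Cauchy as $t \uparrow t_{\mathrm{max}}$, so $\lim_{t\uparrow t_{\mathrm{max}}} u(t) =: \psi \in \mathtt{L}^2(\R)^n$ exists. Then I would solve \eqref{Acomb-globalt0} with initial time $t_{\mathrm{max}}$ and datum $\psi$ via Theorem~\ref{combustion-local-theo}, obtaining a solution on $[t_{\mathrm{max}}, t_{\mathrm{max}} + \eta]$, and concatenate it with $u$ to produce a mild solution on $[0, t_{\mathrm{max}} + \eta]$, contradicting the maximality of $t_{\mathrm{max}}$. Hence $t_{\mathrm{max}} < \infty$ forces $\lim_{t\uparrow t_{\mathrm{max}}}\|u(t)\| = \infty$.

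The main obstacle is the a~priori confinement needed for the uniform $f$-bound: Lemma~\ref{properties-fi}(i) only gives a bound for $w$ in a fixed ball $W$ of radius $\rho$, with the constant $\mu$ depending on $\rho$. To run the Cauchy estimate I must first know that $\|u(t)\|$ stays bounded on $[0, t_{\mathrm{max}})$, not merely along the subsequence $t_k$. I would bridge this gap with a Gronwall argument: from \eqref{Aintegral-form1}, Lemma~\ref{existence-U}(iii), and the linear growth of $f$ (which follows from the Lipschitz property Lemma~\ref{properties-fi}(iii) together with $\|f(t,0)\| \leq \mu_0$), one gets $\|u(t)\| \leq e^{\beta t_{\mathrm{max}}}\|\phi\| + e^{\beta t_{\mathrm{max}}}\int_0^t (\kappa\|u(\tau)\| + \mu_0)\,d\tau$, and Gronwall's lemma then yields a bound on $\|u(t)\|$ uniform over $[0, t_{\mathrm{max}})$ precisely when $t_{\mathrm{max}} < \infty$. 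This uniform bound both confines $u$ to a fixed ball (legitimizing the use of $\mu$) and feeds directly into the Cauchy estimate above, closing the argument.
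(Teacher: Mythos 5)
Your overall strategy is sound and essentially reorganizes the paper's material: your a priori Gronwall bound (global Lipschitz property from Lemma~\ref{properties-fi}(iii) plus $\|f(t,0)\|\leq \mu$) is exactly the estimate \eqref{priori-est} that the paper defers to the proof of Theorem~\ref{combustion-global-theo}, and your maximal-interval/continuation scheme is the Pazy-style argument the paper's proof of Lemma~\ref{global-pazy} follows. The genuine gap is in how you get past $t_{\mathrm{max}}$. You extend from $t_{\mathrm{max}}$ itself, which forces you to prove that $\lim_{t\uparrow t_{\mathrm{max}}}u(t)$ exists, and your Cauchy estimate rests on an incorrect decomposition: for $s<t$,
\begin{equation*}
u(t)-u(s)=\big(U(t,0)-U(s,0)\big)\phi+\int_0^s\big(U(t,\tau)-U(s,\tau)\big)f(\tau,u(\tau))\,d\tau+\int_s^t U(t,\tau)f(\tau,u(\tau))\,d\tau,
\end{equation*}
and you account only for the first and third terms. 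The middle term is not $O(t-s)$, and it is not controlled by the exponential bound of Lemma~\ref{existence-U}(iii), since $U$ is only \emph{strongly} continuous; to kill it you would need strong continuity pointwise in $\tau$ combined with dominated convergence along sequences $s_k,t_k\uparrow t_{\mathrm{max}}$ (the same device the paper uses for the term $\delta_4^j$ in Section~5). Equivalently, writing $u(t)=U(t,s)u(s)+\int_s^t\cdots$, the troublesome term becomes $(U(t,s)-I)u(s)$, and strong continuity gives convergence for each \emph{fixed} vector, not uniformly over the merely bounded, non-compact set $\{u(s):s<t_{\mathrm{max}}\}$ — so this variant is not immediate either. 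The step is repairable, but as written it fails.

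The paper sidesteps this entirely: in its proof of Lemma~\ref{global-pazy}, the local existence time at initial time $t_0$ depends only on $t_0$ and $\|u(t_0)\|$ (the $\epsilon$ of \eqref{defepsilon}), so given a sequence $t_m\uparrow t_{\mathrm{max}}$ with $\|u(t_m)\|\leq C$ it extends the solution from a point $t_m$ with $t_{\mathrm{max}}-t_m<\epsilon$, never needing $u(t)$ to converge at $t_{\mathrm{max}}$. Since your Gronwall bound already confines $u$ to a fixed ball on $[0,t_{\mathrm{max}})$, you could adopt the same shortcut and delete the Cauchy step altogether. A further structural remark: because the Lipschitz constant in Lemma~\ref{properties-fi}(iii) is global in $\mathtt{L}^2(\R)^n$, your "main obstacle" (confinement to $W$) is not actually an obstacle, and your Gronwall bound holds unconditionally whenever $t_{\mathrm{max}}<\infty$; your argument therefore proves $t_{\mathrm{max}}=\infty$ outright, subsuming Theorem~\ref{combustion-global-theo}. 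That is stronger than the lemma and logically acceptable, but it should be presented as such, rather than as a contraposition in which the hypothesis $\|u(t_k)\|\leq M$ is never actually used.
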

\begin{proof}
First, we will show that for all $t_0\geq 0$ and $\phi \in \mathtt{L}^2(\R)^n$, the initial value problem \eqref{Acomb-globalt0} has a unique mild solution $u$ on an interval $[t_0,t_1]$. To do this, take $\kappa$ and $\mu$ as in Lemma \ref{properties-fi}, then we choose  

\begin{equation}\label{defepsilon}
\epsilon=\epsilon(t_0,\|\phi\|):=\min \Bigg\{1,\dfrac{\|\phi\|}{\kappa\;R(t_0)+\mu}\Bigg\},
\end{equation}
where $R(t_0)=2\|\phi\|e^{\beta(t_0 +1)}$.

Let $t_1=t_0+\epsilon$ and $u\in C([t_0,t_1], \,\mathtt{L}^2(\R)^n)$ such that $\|u(t)\|\leq R(t_0)$, for every $t\in [t_0,t_1]$.

Defining the function
\begin{equation}\label{intglobal}
\Psi  u \;(t)=U(t,t_0)\phi +\int_{t_0}^t  U(t,\tau)f(\tau, u(\tau))d\tau, \quad \tau \in [t_0,t_1],
\end{equation}
from Lemmas \ref{existence-U} (iii) and \ref{properties-fi} (i--iii), we have that

\begin{equation}
\begin{split}
\|\Psi  u (t)\|&\leq \; \|U(t,t_0)\phi\|+\int_{t_0}^t  \|U(t,\tau)f(\tau, u(\tau))\|d\tau\\
           &\leq \; \|\phi\|e^{\beta(t_0 +1)}+e^{\beta(t_0 +1)}\int_{t_0}^t \|f(\tau, u(\tau))\|d\tau\\
           &\leq \; e^{\beta(t_0 +1)}\Bigg[\|\phi\|+\int_{t_0}^t \big(\|f(\tau, u(\tau))-f(\tau,0)\|+\|f(\tau,0)\|\big)d\tau\Bigg]\\
           &\leq \; e^{\beta(t_0 +1)}\Big[\|\phi\|+(t-t_0)\big(\kappa\; R(t_0)+\mu\big)\Big]\\
           &\leq \; e^{\beta(t_0 +1)}\Big[\|\phi\|+\epsilon\big(\kappa\; R(t_0)+\mu\big)\Big]\\
           &\leq \; 2\|\phi\|e^{\beta(t_0 +1)} =\; R(t_0).
\end{split}
\end{equation}
Thus, we have $\Psi(B) \subset B$ , where $B$ denotes the closed ball with radius $R(t_0)$ centered at $0$ and contained in $C([t_0,t_1], \,\mathtt{L}^2(\R)^n)$. 

As a consequence of the proof of Theorem~\ref{combustion-local-theo}, we can deduce that $\Psi$ possesses a unique fixed point $u$ in $B$, which represents the solution of \eqref{Acomb-globalt0} on $[t_0,t_1]$.

Suppose that $u$ is a mild solution of \eqref{combustion-problem3}
defined on $[0,s]$. Based on the aforementioned result, we can
conclude that the integral equation 
\begin{equation*}
	w(t)=U(t,s)u(s) +\int_{s}^t U(t,\tau)f(\tau, w(\tau))d\tau
\end{equation*} 
has a solution $w$ that is defined on
$[s,s+\epsilon]$, where $\epsilon>0$ depends only on $\|u(s)\|$,
$R(s)$, $\kappa$ and $\mu$. Thus, we can extend $u$ to the interval
$[0,s+\epsilon]$ by defining $u(t)=w(t)$ on $[s,s+\epsilon]$.

Consider $[0,t_{\mathrm{max}})$ the maximal interval of existence of the mild solution $u$ to \eqref{combustion-problem3}. Assuming $t_{\mathrm{max}}<\infty$ it follows that $\lim_{t\uparrow t_{\mathrm{max}}}\|u(t)\|=\infty$. Indeed, if not there is a sequence $t_m\uparrow t_{\mathrm{max}}$ such that  $\|u(t_m)\|\leq C$. Then using \eqref{defepsilon} we set 

\begin{equation*}
\epsilon_m:=\min \Bigg\{1,\dfrac{\|u(t_m)\|}{\kappa\;R(t_m)+\mu}\Bigg\},
\end{equation*}

thus, we see that there exists $\epsilon:=\lim_{m\to \infty}\epsilon_m$. To conclude this, it's enough to observe that the triangle inequality and integral equation \eqref{Aintegral-form1} imply that $\{\|u(t_m)\|\}$ is a Cauchy sequence. On the other hand, there exists $N>0$ such that $t_{\mathrm{max}}-t_m<\epsilon$, for $m>N$. Thus, the solution $u$ defined on $[0,t_m]$ can be extended to $[0,t_m + \epsilon]$. Therefore, $u$ can be extended beyond $t_{\mathrm{max}}$, which contradicts the definition of $t_{\mathrm{max}}$.

The uniqueness follows as in the proof of Theorem~\ref{combustion-local-theo}.
\end{proof}

\subsection{Proof of Theorem \ref{combustion-global-theo}}\label{proof-teo-global}
\begin{proof}
Let be $u$ the mild solution of \eqref{combustion-problem3} defined on the interval $[0,T]$, as given by 
Theorem~\ref{combustion-local-theo}. Based on Lemmas \ref{existence-U} (iii) and \ref{properties-fi} (i--iii), we can deduce that

\begin{equation}\label{priori-est}
\begin{split}
\|u(t)\|&\leq \|U(t,0)\phi\|+\int_0^t \|U(t,\tau)f(\tau,u(\tau))\|d\tau\\
        &\leq \; e^{\beta t}\Bigg(\|\phi\|+\int_0^t \|f(\tau,u(\tau))\|d\tau\Bigg)\\
        &\leq \; e^{\beta t}\Bigg(\|\phi\|+\int_0^t \big(\|f(\tau,0)\|+\|f(\tau, u(\tau))-f(\tau,0)\|\big)d\tau\Bigg)\\
        &\leq \; e^{\beta t}\Bigg(\|\phi\|+\mu t+\kappa \int_0^t \|u(\tau)\|d\tau\Bigg), \quad t\in [0,T].
\end{split}
\end{equation}	

Using \eqref{priori-est}, Gronwall's lemma, and Lemma~\ref{global-pazy}, we conclude that $t_{\mathrm{max}}=\infty$, where $[0,t_{\mathrm{max}})$ is the maximal interval of existence of the solution $u$, which concludes the proof. 
\end{proof}

\section{Continuous dependence}

In this section, we examine the continuous dependence of the
solution to problem \eqref{combustion-problem3} concerning the
initial data and parameters together. As mentioned in
Section~\ref{introduction}, since the proof presented here is valid
for any fixed $T>0$, it encompasses both local and global
solutions.

Next, we consider a sequence of problems similar to
\eqref{combustion-problem3} as follows:

\begin{equation}\label{combustion-problem3j}
\left\{\begin{array}{l}
\partial_t u+G^j(t)u=f^j(t,u),\\
u^j(0)=\phi^j .
\end{array}\right.
\end{equation}
Here, $G^j(t) := \big(G_{1}^j(t), \ldots, G_{n}^j(t) \big)$, where
$G_{i}^j(t)$ acts on the function $\psi \in D\left(G_{i}^j(t)\right)=
\mathtt{H}^2(\R)$ according to the following definition:
\begin{align}\label{operatorAij}
G_{i}^j(t)&\psi = -\alpha_{i}^j(t)\,\psi^{\prime\prime} + \beta_{i}^j(t)\,\psi^\prime, 
\end{align}
where
\begin{align}
\label{coeficients_appha_beta}
& \alpha_{i}^j(t)=\dfrac{(\lambda_i)^j}{(a_i)^j+(b_i)^j(y_i)^j(t)}\,\,  \mbox{and} \,\, \beta_{i}^j(t)=\dfrac{(c_i)^j}{(a_i)^j+(b_i)^j(y_i)^j(t)}.
\end{align}
For every natural $j$, the functions $(a_i)^j, \,\, (b_i)^j, \,\, (c_i)^j,  \,\,(\lambda_i)^j : \R \rightarrow \R$  satisfy $(H1)$, and $(y_i)^j :\Omega_T \rightarrow \R$ satisfies $(H2)$.
The initial data $\phi^j = \big((\phi_1)^j, \ldots, (\phi_n)^j \big)$ is a given function.
Finally, the source functions $f^j = \big((f_1)^j, \ldots, (f_n)^j \big)$ are defined by
\begin{align} \label{raction-function-j} 
		&(f_1)^j(t, w) = \frac{-((c_1)^j)_x\,w_1 }{(a_1)^j + (b_1)^j (y_1)^j(t)} + 
		\frac{\big((K_1)^j (b_1)^j w_1+(d_1)^j\big)(y_1)^j(t)\,g(w_1)}{(a_1)^j + (b_1)^j (y_1)^j(t)} 
		\nonumber \\
		& \phantom{------}+ \frac{(q_1)^j(w_2 - w_1)}{(a_1)^j + (b_1)^j (y_1)^j(t)}
		- \frac{(\hat q_1)^j(w_1 - u_e)}{(a_1)^j + (b_1)^j (y_1)^j(t)}\,,
		\nonumber \\
		\nonumber \\
		&(f_i)^j(t, w) = \frac{-((c_i)_x)^j\,w_i }{(a_i)^j + (b_i)^j (y_i)^j(t)} + 
		\frac{\big((K_i)^j (b_i)^j w_i+(d_i)^j\big)(y_i)^j(t)\,g(w_i)}{(a_i)^j + (b_i)^j (y_i)^j(t)} 
		\nonumber \\
		& \phantom{----}+ \frac{(q_i)^j(w_{i+1} - w_i)}{(a_i)^j + (b_i)^j (y_i)^j(t)}
		- \frac{(q_{i-1})^j(w_i - w_{i-1})}{(a_i)^j + (b_i)^j (y_i)^j(t)}\,, \,\,i = 2,\ldots, n-1,
		\\
		&(f_n)^j(t, w) = \frac{-((c_n)^j)_x\,w_n }{(a_n)^j + (b_n)^j (y_n)^j(t)} + 
		\frac{\big((K_n)^j (b_n)^j w_n+(d_n)^j\big)(y_n)^j(t)\,g(w_n)}{(a_n)^j + (b_n)^j (y_n)^j(t)} 
		\nonumber \\
		& \phantom{------} - \frac{(q_{n-1})^j(w_n - w_{n-1})}{(a_n)^j + (b_n)^j (y_n)^j(t)}
		- \frac{(\hat q_2)^j(w_n - u_e)}{(a_n)^j + (b_n)^j (y_n)^j(t)}\,,
		\nonumber 
\end{align}
where for each natural $j$, the functions $(d_i)^j, \,\, (q_i)^j, \,\, (K_i)^j,  \,\,(\hat{q}_1)^j, \,\, (\hat{q}_2)^j : \R \rightarrow \R$ satisfy $(H3)$. 

Let $T > 0$ and $W \subset \mathtt{L}^2(\R)^n$ be an open ball centered at the origin such that $\phi$ belongs to $W$. 
Assuming that hypotheses $(H1)$, $(H2)$, and $(H3)$ are satisfied, we proved in 
Section~\ref{proof-teo-local} that the problem \eqref{combustion-problem3} has an unique solution $u \in C\big([0,\,T], \,W' \big)$, where $W'$ is an open ball in $\mathtt{L}^2(\R)^n$, centered at the origin, containing $W$. 

\begin{lemma}\label{lema1} For each natural $j$, we assume that the functions $(a_i)^j, (b_i)^j, \break(c_i)^j$, $(\lambda_i)^j$ satisfy $(H1)$, $(y_i)^j(.,t)$ satisfy $(H2)$ for each $t\in[0,T]$, and the functions $(d_i)^j, (q_i)^j, (K_i)^j$, $(\hat{q}_1)^j$, $(\hat{q}_2)^j$ satisfy $(H3)$.  
	
	We also assume that $(a_i)^j\rightarrow a_i$, $(b_i)^j\rightarrow
	b_i$, $(c_i)^j\rightarrow c_i$, $((c_i)^j)_x \rightarrow (c_i)_x$, $(\lambda_i)^j\rightarrow
	\lambda_i$, $(y_i(.,t))^j\rightarrow y_i(.,t)$ for each $t \in [0, T]$, $(d_i)^j\rightarrow d_i$, $(q_i)^j\rightarrow q_i$, $(K_i)^j\rightarrow K_i$, $(\hat q_1)^j\rightarrow \hat q_1$, and $(\hat q_2)^j\rightarrow \hat q_2$, where all convergences are in $\mathtt{L}^\infty(\R)$. Then, we have:
	\begin{enumerate} 
		\item[(i)] $G_{i}^j(t)\rightarrow G_i(t)$ in
		$\mathcal{B}(\mathtt{H}^2,\mathtt{L}^2)$; 
		\item[(ii)] $\lim_{|E|\rightarrow 0}\int_E
		||G_{i}^j(t)||_{\mathcal{B}(\mathtt{H}^2,\mathtt{L}^2)}dt\rightarrow 0$; 
		\item[(iii)] For each $u\in E_T$, $f^j(t,u(t))\rightarrow f(t,u(t))$ in $\mathtt{L}^2(\R)$,
		pointwise in $t$, where $f_i(t,u(t)):=f_i(x, t, u(t))$; $(f_i)^j(t,u(t)):=(f_i)^j(x, t, u(t))$, and $E_T$ is defined in
		\eqref{ET}. 
	\end{enumerate} 
\end{lemma}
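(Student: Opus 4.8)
The plan is to derive all three statements from one elementary fact about quotients: if $p^j\to p$ and $D^j\to D$ in $\mathtt{L}^\infty(\R)$ with $D^j,D\ge k_1>0$ a.e.\ and $p$ bounded, then from
\[
\frac{p^j}{D^j}-\frac{p}{D}=\frac{(p^j-p)\,D+p\,(D-D^j)}{D^j\,D}
\]
one obtains $\big\|p^j/D^j-p/D\big\|_{\mathtt{L}^\infty}\le k_1^{-2}\big(\|p^j-p\|_{\mathtt{L}^\infty}\|D\|_{\mathtt{L}^\infty}+\|p\|_{\mathtt{L}^\infty}\|D^j-D\|_{\mathtt{L}^\infty}\big)\to0$. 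In our setting the common denominator is $D_i^j:=(a_i)^j+(b_i)^j(y_i)^j(t)$, which by $(H1)$ satisfies $D_i^j\ge k_1$ uniformly in $j$ and $t$; moreover, for each fixed $t$, $D_i^j\to a_i+b_iy_i(t)$ in $\mathtt{L}^\infty$, since $(a_i)^j\to a_i$ and $(b_i)^j(y_i)^j\to b_iy_i$ (a product of uniformly bounded, $\mathtt{L}^\infty$-convergent sequences, using $\|(y_i)^j\|_{\mathtt{L}^\infty}\le k_3$ and $\|b_i\|_{\mathtt{L}^\infty}\le k_2$).

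For (i) I would use the operator bound
\[
\big\|(G_i^j(t)-G_i(t))\psi\big\|\le \|\alpha_i^j(t)-\alpha_i(t)\|_{\mathtt{L}^\infty}\,\|\psi''\|+\|\beta_i^j(t)-\beta_i(t)\|_{\mathtt{L}^\infty}\,\|\psi'\|,
\]
so that $\|G_i^j(t)-G_i(t)\|_{\mathcal{B}(\mathtt{H}^2,\mathtt{L}^2)}\le\|\alpha_i^j(t)-\alpha_i(t)\|_{\mathtt{L}^\infty}+\|\beta_i^j(t)-\beta_i(t)\|_{\mathtt{L}^\infty}$. Applying the quotient fact to $\alpha_i^j=(\lambda_i)^j/D_i^j$ and $\beta_i^j=(c_i)^j/D_i^j$ shows both terms tend to $0$, which proves (i). For (ii), the same bound with numerators replaced by their uniform $\mathtt{L}^\infty$ estimates ($\|(\lambda_i)^j\|_{\mathtt{L}^\infty},\|(c_i)^j\|_{\mathtt{L}^\infty}\le k_2$ by $(H1)$) gives $\|G_i^j(t)\|_{\mathcal{B}(\mathtt{H}^2,\mathtt{L}^2)}\le 2k_2/k_1=:M$ for all $j$ and $t$; hence $\int_E\|G_i^j(t)\|_{\mathcal{B}(\mathtt{H}^2,\mathtt{L}^2)}\,dt\le M|E|\to0$ as $|E|\to0$ uniformly in $j$, which is the uniform-integrability content of (ii).

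For (iii), fix $t$ and write $f_1^j(t,u(t))-f_1(t,u(t))$ as the sum of the term-by-term differences coming from \eqref{raction-function-j} and \eqref{reaction-function-1}; each difference factors as $(\text{coefficient}^j-\text{coefficient})\times(\text{a fixed function of }u(t))$, and every coefficient is a quotient of the above type, hence converges to $0$ in $\mathtt{L}^\infty$. It then suffices to check that each accompanying $u$-factor lies in $\mathtt{L}^2(\R)$: this holds for $u_1$, for $u_2-u_1$, and for $u_1g(u_1)$ (as $\|u_1g(u_1)\|\le\|g\|_{\mathtt{L}^\infty}\|u_1\|$), and for $g(u_1)$ itself because the Arrhenius law \eqref{darcy} satisfies $g(0)=0$, whence $|g(u_1)|\le\|g'\|_{\mathtt{L}^\infty}|u_1|$ gives $g(u_1)\in\mathtt{L}^2$. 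For all such terms $\|(\text{coef}^j-\text{coef})\,h\|\le\|\text{coef}^j-\text{coef}\|_{\mathtt{L}^\infty}\|h\|\to0$. The one remaining piece is the constant contribution $u_e$ inside the $(\hat q_1)^j$ term: since $u_e$ does not decay, $u_e\big((\hat q_1)^j/D_1^j-\hat q_1/D_1\big)$ must be controlled in $\mathtt{L}^2$, which is handled by the same quotient decomposition but estimating the numerator difference via the $\mathtt{L}^2$-convergence $(\hat q_1)^j\to\hat q_1$ (consistent with $(H3)$) and the denominator factors in $\mathtt{L}^\infty$. Summing the finitely many terms yields $\|f_1^j(t,u(t))-f_1(t,u(t))\|\to0$, and $f_2,\dots,f_n$ are treated identically.

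The main obstacle is precisely the non-decaying $u_e$-contribution in the heat-loss terms governed by $\hat q_1,\hat q_2$: every other term pairs an $\mathtt{L}^\infty$-small coefficient with an $\mathtt{L}^2$ factor and is routine, whereas this term forces one to exploit $\hat q_1,\hat q_2\in\mathtt{L}^2$ and to upgrade coefficient convergence to genuine $\mathtt{L}^2$-convergence. A secondary point worth stating explicitly is the use of $g(0)=0$; without it the term $\sigma\,g(u_1)$ (with $\sigma=d_1y_1/(a_1+b_1y_1)\in\mathtt{L}^\infty$) would fail to lie in $\mathtt{L}^2$, already obstructing the well-definedness assumed in Lemma~\ref{properties-fi}, so this normalization is implicit throughout.
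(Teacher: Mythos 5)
Your proposal takes essentially the paper's route where the paper gives a proof, and is more explicit where it does not. For (iii), the paper performs exactly your term-by-term decomposition, pairing an $\mathtt{L}^\infty$-norm of a difference of quotient coefficients with an $\mathtt{L}^2$-norm of a factor built from $u(t)$, and it uses the same mean-value observation $\|g(u_i)\|\leq \|g'(\overline{u})u_i\|\leq const\,\|u_i\|$ resting on $g(0)=0$; for (i)--(ii) the paper merely cites \cite{Alarcon}, whereas you supply the (correct, elementary) quotient estimate with the uniform lower bound $D_i^j\geq k_1$ and the uniform operator bound $2k_2/k_1$, whose integral over $E$ is $O(|E|)$ uniformly in $j$. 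The one genuine divergence is at the end layers: the paper writes out (iii) only for $i=2,\dots,n-1$ and dismisses $i=1$ and $i=n$ as ``similar,'' while you isolate the non-decaying $u_e$ contribution of the heat-loss terms $\hat q_1,\hat q_2$ and observe, correctly, that an $\mathtt{L}^\infty$-small coefficient multiplying the constant $u_e$ is not small in $\mathtt{L}^2(\R)$.

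Be aware, however, that your fix for that term imports a hypothesis the lemma does not grant: the lemma assumes only $(\hat q_1)^j\to\hat q_1$ in $\mathtt{L}^\infty(\R)$, and $(H3)$ gives membership of $(\hat q_1)^j,\hat q_1$ in $\mathtt{L}^2(\R)$, not $\mathtt{L}^2$-convergence; your parenthetical ``consistent with $(H3)$'' glosses over this. Under the literal hypotheses the piece $u_e\big((\hat q_1)^j-\hat q_1\big)/D_1^j$ need not vanish in $\mathtt{L}^2$: take constant coefficients, $y_1\equiv 0$, and $(\hat q_1)^j=\hat q_1+j^{-1/2}\chi_{[0,j]}$, which satisfies $(H3)$ and converges to $\hat q_1$ in $\mathtt{L}^\infty(\R)$, yet $\|(\hat q_1)^j-\hat q_1\|_{\mathtt{L}^2(\R)}=1$, so for $u_e\neq 0$ the corresponding term in $f_1^j-f_1$ stays bounded away from $0$ in $\mathtt{L}^2$. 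So either the convergence of $(\hat q_1)^j$ and $(\hat q_2)^j$ must be strengthened to $\mathtt{L}^2(\R)$ (your implicit reading, under which your splitting $((\hat q_1)^j-\hat q_1)/D_1^j+\hat q_1(1/D_1^j-1/D_1)$ closes the argument), or the lemma's conclusion fails for $i=1,n$; the paper's ``the cases $i=1$ and $i=n$ are similar'' conceals precisely this point. With that strengthened hypothesis stated explicitly, your argument is complete and, for layers $1$ and $n$, more careful than the paper's own proof.
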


\begin{proof}
	The proofs of (i) and (ii) can be made similar to the proof in \cite[Lemma~5.3]{Alarcon}.
	For the proof of (i), let's consider $i=2,\dots,n-1$; the cases $i=1$ and $i=n$ are similar. 	
	If $t\in[0,T]$, 
	\begin{equation}
	\begin{split}
		\|(f_i)^j(t,u(t))-&f_i(t,u(t))\|\\ 
		\leq&\left\|\frac{- (c_i)_x }{a_i + b_i y_i}\, -
		\frac{- ((c_i)^j)_x }{(a_i)^j + (b_i)^j (y_i)^j}\right\|_{\mathtt{L}^\infty} \left\|u_i\right\|\\
		+&
		\left\|\frac{b_i K_i y_i}{a_i + b_i y_i}  -\frac{(b_i)^j (K_i)^j (y_i)^j}{(a_i)^j + (b_i)^j (y_i)^j} \right\|_{\mathtt{L}^\infty}\left\|g(u_i) u_i \right\|\\
		+& \left\|\frac{d_i \,y_i }{a_i+b_i y_i}- \frac{(d_i)^j \,(y_i)^j }{(a_i)^j+(b_i)^j (y_i)^j}\right\|_{\mathtt{L}^\infty}\left\|g(u_i)\right\|\\
		+&\left\|\frac{ q_{i-1} }{a_i + b_i y_i}-\frac{ (q_{i-1})^j }{(a_i)^j + (b_i)^j (y_i)^j}\right\|_{\mathtt{L}^\infty} \left\|u_i - u_{i-1}\right\|\\
		+&\left\|\frac{ q_i }{a_i + b_i y_i}-\frac{ (q_i)^j }{(a_i)^j + (b_i)^j (y_i)^j}\right\|_{\mathtt{L}^\infty} \left\|u_{i+1} - u_i\right\|.
		\end{split}
	\end{equation}

	Since $u \in E_T$, the function $g$ is bounded, in fact $\|g(u_i)\|\leq \|g'(\overline{u})u_i\|\leq const\|u_i\|$. Thus $(f_i)^j(t,u(t))\rightarrow f_i(t,u(t))$ in $\mathtt{L}^2(\R)$, independent of $t \in [0, T]$, which completes the proof.
\end{proof}

\begin{corollary} \label{corolf} 
Under the same hypothesis as Lemma \ref{lema1}, for each $u\in E_T$, $f^j$ converges 
to $f$ in $\mathtt{L}^1([0,T], \mathtt{L}^2(\R)^n)$. 
\end{corollary}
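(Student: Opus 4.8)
The plan is to lift the pointwise-in-$t$ convergence supplied by Lemma~\ref{lema1}(iii) to convergence in the Bochner space $\mathtt{L}^1([0,T], \mathtt{L}^2(\R)^n)$ by means of the Dominated Convergence Theorem. Writing $F_j(t) := \|f^j(t, u(t)) - f(t, u(t))\|_{\mathtt{L}^2(\R)^n}$ for a fixed $u \in E_T$, the goal is exactly to show that $\int_0^T F_j(t)\,dt \to 0$ as $j \to \infty$.

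First I would record the two soft ingredients. The pointwise convergence is immediate from Lemma~\ref{lema1}(iii): for each fixed $t \in [0,T]$ one has $F_j(t) \to 0$ as $j \to \infty$. Measurability of each $F_j$ on $[0,T]$ follows from continuity: since $u \in E_T$ is continuous and, by the argument behind Lemma~\ref{properties-fi}(ii), the maps $t \mapsto f(t, u(t))$ and $t \mapsto f^j(t, u(t))$ are continuous into $\mathtt{L}^2(\R)^n$, the real-valued function $F_j$ is continuous, hence measurable and integrable candidate.

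The substantive step is to produce a single integrable dominating function independent of $j$. By the triangle inequality, $F_j(t) \leq \|f^j(t, u(t))\| + \|f(t, u(t))\|$. Because $u \in E_T$ forces $u(t)$ to lie in the ball $W'$ of radius $R$ for every $t$, Lemma~\ref{properties-fi}(i) furnishes a constant $\mu'$, depending only on $R$, with $\|f(t, u(t))\| \leq \mu'$ for all $t \in [0,T]$. For the terms $f^j$ I would re-run the estimate underlying Lemma~\ref{properties-fi}(i), now with the superscript-$j$ coefficients: the denominators $(a_i)^j + (b_i)^j (y_i)^j$ are bounded below by $k_1 > 0$ uniformly in $j$ and $t$ by hypothesis $(H1)$(i), while the numerator coefficients $((c_i)^j)_x$, $(K_i)^j$, $(d_i)^j$, $(q_i)^j$, $(\hat q_1)^j$, $(\hat q_2)^j$ are $\mathtt{L}^\infty$-convergent sequences and hence uniformly bounded in $j$. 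This yields a constant $C$, independent of $j$ and $t$, with $\|f^j(t, u(t))\| \leq C$. Consequently $F_j(t) \leq \mu' + C =: h$ for all $j$ and all $t \in [0,T]$, and the constant $h$ is trivially integrable on the finite interval $[0,T]$.

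With the pointwise convergence $F_j \to 0$ and the uniform integrable bound $F_j \leq h$ established, the Dominated Convergence Theorem gives $\int_0^T F_j(t)\,dt \to 0$, which is precisely $\|f^j - f\|_{\mathtt{L}^1([0,T], \mathtt{L}^2(\R)^n)} \to 0$. The main obstacle is the uniform-in-$j$ bound on $\|f^j(t, u(t))\|$: this is the only place where one must combine the uniform positive lower bound on the denominators coming from $(H1)$(i) with the boundedness of the $\mathtt{L}^\infty$-convergent coefficient sequences. Everything else is a routine application of standard Bochner-integral convergence.
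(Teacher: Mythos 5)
Your proposal is correct and takes essentially the same route as the paper: pointwise-in-$t$ convergence from Lemma~\ref{lema1}(iii), a $j$-independent bound $\|f^j(t,u(t))\| \leq const$, and the Lebesgue Dominated Convergence Theorem. The only difference is that you spell out why the dominating constant exists (the uniform lower bound $k_1$ on the denominators from $(H1)$(i) together with the boundedness of the $\mathtt{L}^\infty$-convergent coefficient sequences), whereas the paper simply asserts it.
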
 
\begin{proof} From Lemma
\ref{lema1}(iii), we know that $f^j(t,u(t))$ converges to
$f(t,u(t))$ in $\mathtt{L}^2(\R)$, pointwise in $t$. Furthermore, there
exists a constant $const>0$ such that $\|f^j(t,u(t))\|\leq const$, for
all $j\in\mathbb{N}$. We conclude by the Lebesgue dominated convergence 
theorem that $\int_0^T \|f^j(t,u(t))-f(t,u(t))\|dt\rightarrow 0$. 
\end{proof}

\begin{lemma}\label{lema2} 
	Assuming the same hypotheses as Lemma \eqref{lema1}, we have that $U^j(t,s)$ $\rightarrow U(t,s)$ strongly in $\mathcal{B}(\mathtt{L}^2(\R)^n)$, uniformly in $t,s\in\Delta$. \end{lemma}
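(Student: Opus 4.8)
The plan is to reduce to a single layer and then use the standard Duhamel-type comparison identity between the two evolution families. Since $U=(U_1,\dots,U_n)$ and $G=(G_1,\dots,G_n)$ act diagonally and $\|\cdot\|_{\mathtt{L}^2(\R)^n}=\max_i\|\cdot\|_{\mathtt{L}^2(\R)}$, it suffices to show, for each fixed $i$ and each $\psi\in\mathtt{L}^2(\R)$, that $\sup_{(t,s)\in\Delta}\|U_i^j(t,s)\psi-U_i(t,s)\psi\|\to 0$. First I would establish the comparison identity: fixing $(t,s)\in\Delta$ and $\psi\in\mathtt{H}^2(\R)$, the map $\tau\mapsto U_i^j(t,\tau)U_i(\tau,s)\psi$ is differentiable on $[s,t]$, because $U_i(\tau,s)\psi\in\mathtt{H}^2(\R)=D(G_i(\tau))$ by Lemma~\ref{existence-Ui}(iii), and the evolution families constructed in \cite{Alarcon} satisfy $\partial_\tau U_i(\tau,s)\psi=-G_i(\tau)U_i(\tau,s)\psi$ together with $\partial_\tau U_i^j(t,\tau)\chi=U_i^j(t,\tau)G_i^j(\tau)\chi$ for $\chi\in\mathtt{H}^2(\R)$. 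Applying the product rule and integrating over $[s,t]$ gives
\begin{equation}\label{duhamel-comp}
U_i^j(t,s)\psi-U_i(t,s)\psi=\int_s^t U_i^j(t,\tau)\big(G_i(\tau)-G_i^j(\tau)\big)U_i(\tau,s)\psi\,d\tau .
\end{equation}

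Next I would estimate the right-hand side of \eqref{duhamel-comp}. By Lemma~\ref{existence-U}(iii) applied to the family $U_i^j$ we have $\|U_i^j(t,\tau)\|_{\mathcal{B}(\mathtt{L}^2(\R))}\le e^{\beta_j T}$; since the coefficients of $G_i^j$ converge in $\mathtt{L}^\infty(\R)$ to those of $G_i$, the exponents $\beta_j$ are eventually uniformly bounded, so there is $M<\infty$ with $\|U_i^j(t,\tau)\|_{\mathcal{B}(\mathtt{L}^2(\R))}\le M$ for all $j$ and all $(t,\tau)\in\Delta$. For $\psi\in\mathtt{H}^2(\R)$, the strong continuity of $(\tau,s)\mapsto U_i(\tau,s)$ in $\mathtt{H}^2(\R)$ (Lemma~\ref{existence-Ui}(iii)) and compactness of $\Delta$ give $C:=\sup_{(\tau,s)\in\Delta}\|U_i(\tau,s)\psi\|_{\mathtt{H}^2(\R)}<\infty$. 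Hence, writing $h_j(\tau):=C\,\|G_i(\tau)-G_i^j(\tau)\|_{\mathcal{B}(\mathtt{H}^2,\mathtt{L}^2)}$, equation \eqref{duhamel-comp} yields the bound, uniform in $(t,s)\in\Delta$,
\begin{equation}\label{hj-bound}
\sup_{(t,s)\in\Delta}\|U_i^j(t,s)\psi-U_i(t,s)\psi\|\le M\int_0^T h_j(\tau)\,d\tau ,
\end{equation}
in which the majorant $h_j$ no longer depends on $s$.

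Then I would pass to the limit in \eqref{hj-bound}. By Lemma~\ref{lema1}(i), $h_j(\tau)\to 0$ for a.e.\ $\tau$, while Lemma~\ref{lema1}(ii), together with the integrability of $\tau\mapsto\|G_i(\tau)\|_{\mathcal{B}(\mathtt{H}^2,\mathtt{L}^2)}$ (whose coefficients are uniformly bounded under $(H1)$--$(H2)$), shows that the family $\{h_j\}$ is uniformly integrable on $[0,T]$. The Vitali convergence theorem then gives $\int_0^T h_j(\tau)\,d\tau\to 0$, so \eqref{hj-bound} proves the claim for every $\psi\in\mathtt{H}^2(\R)$, uniformly in $(t,s)$. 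Finally I would remove the regularity of $\psi$ by density: given $\psi\in\mathtt{L}^2(\R)$ and $\varepsilon>0$, choose $\tilde\psi\in\mathtt{H}^2(\R)$ with $\|\psi-\tilde\psi\|<\varepsilon$ and split
\begin{equation*}
\|U_i^j(t,s)\psi-U_i(t,s)\psi\|\le \big(M+e^{\beta T}\big)\,\varepsilon+\|U_i^j(t,s)\tilde\psi-U_i(t,s)\tilde\psi\| ,
\end{equation*}
where the last term tends to $0$ uniformly in $(t,s)$ by the previous step; taking $\sup_{(t,s)}$, then $j\to\infty$, then $\varepsilon\to 0$ finishes the single-layer claim, and taking the maximum over $i$ returns the statement in $\mathcal{B}(\mathtt{L}^2(\R)^n)$.

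The step I expect to be the main obstacle is the rigorous justification of the comparison identity \eqref{duhamel-comp}: one must know that both evolution families are genuinely differentiable on the common dense subspace $\mathtt{H}^2(\R)$ with the stated generator relations, that $U_i(\tau,s)$ maps $\mathtt{H}^2(\R)$ into itself with $\mathtt{H}^2$-operator norm locally bounded on $\Delta$, and that the product rule applies to the merely strongly continuous, operator-valued map $\tau\mapsto U_i^j(t,\tau)U_i(\tau,s)\psi$. These are exactly the regularity properties furnished by Kato's construction underlying Lemmas~\ref{existence-Ui} and \ref{existence-U} in \cite{Alarcon}. The secondary technical point is ensuring that the $\mathtt{L}^2$-operator bound $M$ is uniform in $j$, which follows from the $\mathtt{L}^\infty$-convergence of the coefficients and the explicit dependence of $\beta_j$ on their $\mathtt{L}^\infty$-norms.
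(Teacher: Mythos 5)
Your proposal is correct and coincides with the paper's approach: the paper reduces to the scalar components and cites Kato (Theorem~V of the reference \cite{Kato3}), whose proof is precisely your comparison identity $U_i^j(t,s)\psi-U_i(t,s)\psi=\int_s^t U_i^j(t,\tau)\big(G_i(\tau)-G_i^j(\tau)\big)U_i(\tau,s)\psi\,d\tau$ for $\psi\in\mathtt{H}^2(\R)$, followed by a uniform-in-$j$ stability bound, the Vitali step driven by Lemma~\ref{lema1}(i)--(ii), and density of $\mathtt{H}^2(\R)$ in $\mathtt{L}^2(\R)$. The regularity inputs you flag (the generator differentiation identities on $\mathtt{H}^2(\R)$, the $\mathtt{H}^2$-invariance with locally bounded norms, and the uniform exponent $\beta_j$) are exactly what Kato's construction, invoked via \cite{Alarcon}, supplies, so your write-up simply fills in the argument the paper omits.
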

\begin{proof} This is a consequence of $(U_i)^j(t,s)\rightarrow
	U_i(t,s)$ strongly in $\mathcal{B}(\mathtt{L}^2(\R))$, uniformly in $t,s\in\Delta$. The proof of this result is omitted here, as it is similar to the proof in \cite[Theorem V, p.661]{Kato3}. 
\end{proof}

\subsection{Proof of Theorem \ref{combustion-continuous-dependence-theo}}
\label{proof-continuous-dependence}
\begin{proof}
Assuming that $\phi\in W\subset \mathtt{L}^2(\R)^n$ and $\phi^j\rightarrow \phi$ in
$\mathtt{L}^2(\R)^n$, we can consider without loss of generality that $\phi^j \in W$.

Let $u,\, u^j\in C([0,T], \,W)$ be mild local solutions to \eqref{combustion-problem3} and \eqref{combustion-problem3j}, respectively,  with $u(0)=\phi\in W\subset \mathtt{L}^2(\R)^n$ and $u^j(0)=\phi^j\in W\subset \mathtt{L}^2(\R)^n$. Then, we have
\begin{equation*}
\begin{split} 
u^j(t)-u(t)=&\; U^j(t,0)\phi^j-U(t,0)\phi\\
&+\int_0^t U^j(t,\tau)[f^j(\tau,u^j(\tau))-U(t,\tau)f(\tau,u(\tau))]d\tau \\
		=& \; U^j(t,0)(\phi^j-\phi)+(U^j(t,0)-U(t,0))\phi \\
	&+ \int_0^t U^j(t,\tau)[f^j(\tau,u^j(\tau))-f(\tau,u(\tau))]d\tau\\
	&+\int_0^t(U^j(t,\tau)-U(t,\tau))f(\tau,u(\tau))d\tau 	\\
	 =&\; U^j(t,0)(\phi^j-\phi)+(U^j(t,0)-U(t,0))\phi\\
	&+ \int_0^t U^j(t,\tau)[f^j(\tau,u^j(\tau))-f^j(\tau,u(\tau))]d\tau 
		 \\
		 &+ \int_0^tU^j(t,\tau)[f^j(\tau,u(\tau))-f(\tau,u(\tau))]d\tau\\
	& +\int_0^t(U^j(t,\tau)-U(t,\tau))f(\tau,u(\tau))d\tau\\
		 :=&\; \delta_0^j(t)+\delta_1^j(t)+\delta_2^j(t)+\delta_3^j(t)+\delta_4^j(t)\,.
\end{split}
\end{equation*}
Lemmas \ref{existence-U} and \ref{lema2} imply that 
\begin{equation}\label{consUj}
\|U^j(t,\tau)\|_{\mathcal{B}(\mathtt{L}^2(\R)^n)}\leq 1+ e^{\beta T}:=\;\tilde{c}.
\end{equation}	
	Then we have
	\begin{equation*} 
	\begin{split}
	\|\delta_0^j(t)\|=&\|U^j(t,0)(\phi^j-\phi)\|\leq \; \tilde{c} \;\|\phi^j-\phi\| \to \; 0,\quad \mbox{with}\quad j\rightarrow\infty,
	\end{split}
	\end{equation*}
	since $\phi^j\rightarrow \phi$ in $\mathtt{L}^2(\R)^n$. 

Also, using Lemma \ref{lema2} it follows that
	\begin{equation*} 
		\|\delta_1^j(t)\|=\|(U^j(t,0)-U(t,0))\phi\| \to 0,\quad \mbox{when} \quad j\to\infty.
	\end{equation*}

	From Lemma \ref{properties-fi} (iii) and \eqref{consUj} we obtain
	\begin{equation*} 
	\begin{split}
	\|\delta_2^j(t)\|\leq&\int_0^t\|U^j(t,\tau)(f^j(\tau,u^j(\tau))-f^j(\tau,u(\tau)))\|d\tau\\
		\leq& \; \tilde{c} \int_0^t\|u^j(\tau)-u(\tau)\|d\tau.
		\end{split}
	\end{equation*}
	On the other hand, Corollary \ref{corolf} and \eqref{consUj} yields us
	\begin{equation*} 
	\begin{split}
	\|\delta_3^j(t)\|\leq& \; \int_0^t\|U^j(t,\tau)(f^j(\tau,u(\tau))-f(\tau,u(\tau)))\|d\tau\\
		\leq& \; \tilde{c}\int_0^t\|f^j(\tau,u(\tau))-f(\tau,u(\tau))\|d\tau \to 0,
		\end{split}
	\end{equation*}
	
	and combining the Lebesgue dominated convergence theorem with \break Lemma \ref{lema2} we get	
	\begin{equation*} 
	\|\delta_4^j(t)\|\leq \; \int_0^t\|(U^j(t,\tau)-U(t,\tau))f(\tau,u(\tau))\|d\tau \, \to 0.
	\end{equation*}

	Then, denoting $\delta^j(t)= \|\delta_0^j(t)\|+\|\delta_1^j(t)\|+\|\delta_3^j(t)\|+\|\delta_4^j(t)\|$ and using the information above, we can conclude that
	
	\begin{equation*}
	\|u^j(t)-u(t)\|\leq \;\delta^j(t)+\tilde{c} \int_0^t\|u^j(\tau)-u(\tau)\|d\tau \,.
	\end{equation*}
	
	Thus, an application of Gronwall's inequality gives us
	
	\begin{equation*}
	\begin{split}
		\|u^j(t)-u(t)\|\leq& \; \delta^j(t)+\tilde{c}\int_0^t \delta^j(\tau) e^{\tilde{c} T}d\tau \\
		\leq& \;\sup_{0 \leq t \leq T}\delta^j(t)\left(1+T\tilde{c}e^{\tilde{c}T}\right).
	\end{split}
	\end{equation*}
	Therefore, $u^j$ converge to $u$ in $C([0,T], \,W)$, which completes the proof.
\end{proof}

\section{Concluding remarks}
The theory of semigroups and Kato's theory were employed to obtain a solution to the initial value problem for a combustion model in a multilayer porous medium.
The approach used proved to be very efficient, allowing for the consideration of a more realistic model where certain physical parameters (e.g., porosity, thermal conductivity, and initial fuel concentration) vary with the spatial variable $x$, instead of being constant as assumed in earlier studies \cite{Batista1, Batista2}. The method enabled the derivation of an explicit formula for the solution through an integral representation, as given by Eq.~\eqref{integral-form1}. The uniqueness of the solution naturally followed from this integral representation, which relies on the uniqueness of the evolution operator both locally and globally in time.
Furthermore, the continuous dependence of the solution with respect to the initial data and the parameters was established due to the integral formula. This is particularly important because practical applications often involve initial data that incorporate rounding errors, making the study of continuous dependence essential for such problems. The results of this work, which considers initial data in $\mathtt{L}^2(\R)^n$ instead of $\mathtt{H}^2(\R)^n$, contribute to the broader goal of analyzing the widest possible space for initial data, both from a theoretical standpoint and in practical applications.

It should be noted that the more realistic problem where fuel concentrations are also unknown functions cannot be directly solved using the semigroup theory, as the system of partial differential equations (Eqs. \eqref{be14}-\eqref{bmf4}) does not fall under the category of reaction-diffusion equations. However, the results obtained in this study can be utilized to construct an iterative sequence that converges to the desired solution.

Several other questions can be explored based on the model described in Appendix~\ref{model}. For instance, if the operator $G$ depends on both $t$ and $u$, the corresponding initial value problem becomes a quasi-linear problem, which is considerably more challenging than the semilinear problem studied here. Additionally, investigating
the local and global well-posedness in $\mathtt{H}^2(\R)^n$ with a bore-like initial condition 
(see \cite{Iorio} for more details) represents another interesting problem to be explored, among others.


\appendix
	
\section{Model}\label{model}

The model deduced in \cite{Batista1} is restated here, with the only difference being that some of the physical parameters are considered dependent on the spatial variable $x$ rather than being constant.

The porous medium is presumed horizontal, one-dimensional, and
comprised of $n$ parallel layers (see Fig.~\ref{meio-poroso}), each one with an initially available concentration of a solid fuel, such as coke.

\begin{figure}[h]
	\begin{center}
		{\includegraphics*[width=3.0in]{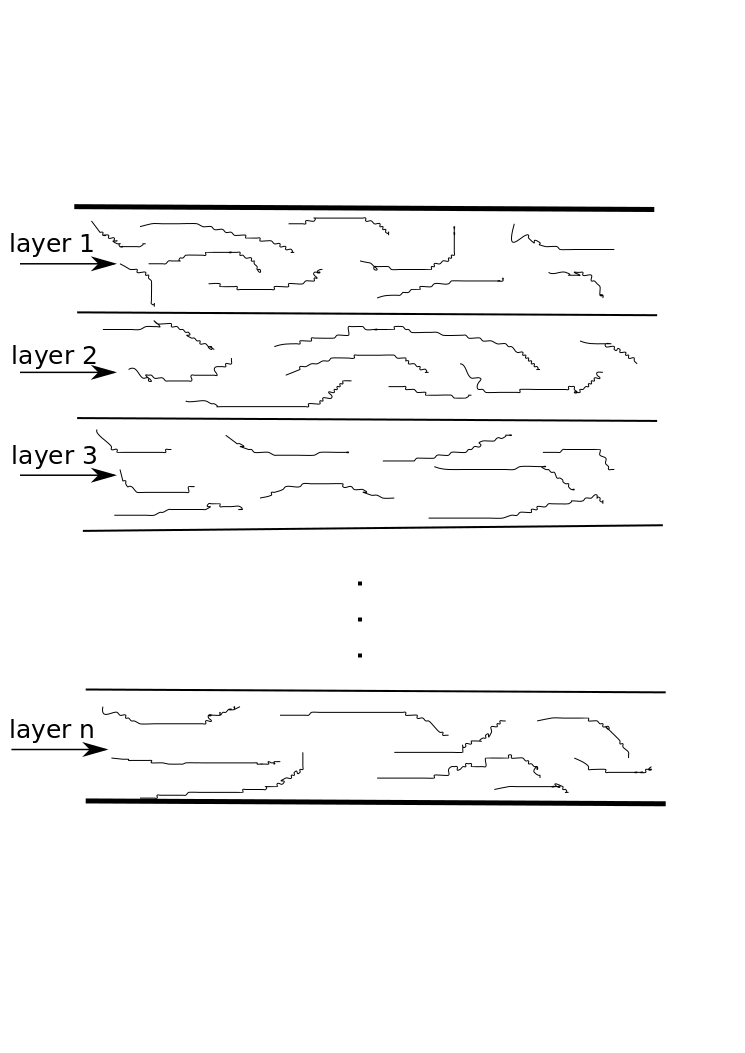}
			\begin{center}
				\vspace{-1.0in} 
				\caption {Porous medium with $n$ parallel layers}
				\label{meio-poroso}
			\end{center}
		}
	\end{center}
\end{figure} 
 
The chemical reaction in each layer takes the simple form
\begin{align}
[\text{solid reactant] $+$ [gaseous reactant] $\rightarrow$
	[gaseous product] $+$ [heat]} \,.
\end{align}

To formulate the balance equations, we presume that gas, rock matrix, and fuel are locally in thermal equilibrium in each layer at all times. Therefore, only one temperature is used for the energy balance in each layer. Porosity and conductivity in each layer are assumed to be functions of the space variable. The effects of radiation, viscous dissipation, and work by 
pressure changes are neglected. However, a reaction rate, longitudinal heat conduction, 
heat transfer between the layers, and heat loss to the surrounding rock formation are taken into consideration.
Subscripts $g$, $r$, and $c$ refer to oxygen, rock, and coke,
respectively, and subscript $s$ refers to an entire layer.
Subscript $i$, $i = 1,\ldots, n$ designate the $i$th layer.

In the $i$th layer, the state variables
depending on $(x, t)$ are temperature $T_{i}$, fuel
concentration $\eta_{i}$, oxygen mass fraction in the gas
phase $Y_i$, Darcy's velocity $v_i$, and pressure $p_{i}$.

The other relevant quantities in the $i$th layer are
gas density $\rho_{g_i}$, given by a layer-independent equation of state 
$\rho_{g_i} = \rho_g(T_{i}, p_{i})$, rock density $\rho_{r_i}$,
porosity $\phi_i$,  thermal conductivity $\lambda_{i}$,
specific heat of gas, rock, and coke at constant pressure $c_{g_i}$, $c_{r_i}$, and $c_{c_i}$, respectively, rate of coke consumption in the chemical reaction $r_i$, heat of reaction $Q_{h_i}$,
flow resistance appearing in Darcy's law $K_{s_i}$,
which is directly proportional to rock permeability and inversely
proportional to gas viscosity, mass-weighted stoichiometric coefficients
for oxygen and inert gas $m_{o_i}$ and $m_{g_i}$, respectively.
The quantity $m_{g_i}$ may assume positive, negative, or zero values
depending on whether the amount of gas produced by the reaction is
more than, less than, or equal to the amount of gas consumed by it.
The coefficients of heat transfer
between the layers $i$ and $i+1$ are denoted by $Q_i$, $i = 1, \ldots, n-1$. The coefficients of heat transfer
between the external environment and layers $1$ and $n$ are, denoted by $\hat Q_1$ and $\hat Q_2$, respectively. 
Lastly, the external environment's temperature is assumed to remain constant and is denoted by $T_e$.

The following equations are assumed to hold in the $i$th layer, $i=1, \ldots, n$.
\smallskip
 
\noindent{\it Balance of energy}

\noindent For $i = 1$,
+
\begin{align}
\frac{\partial}{\partial t}&\big(\phi_1 \rho_{g_1} c_{g_1} T_1 +
(1-\phi_1) \rho_{r_1} c_{r_1} T_1 + \eta_1 c_{c_1} T_1 \big) = 
- \frac{\partial}{\partial x} \left( \rho_{g_1} c_{g_1} v_1 T_1 \right)
\nonumber
\\ 
& \phantom{---} + Q_{h_1} r_1 - \hat Q_1 (T_1 - T_e) + Q_1 (T_2 - T_1) + 
\frac{\partial}{\partial x}\big(\lambda_{s_1} \frac{\partial T_1} {\partial x} \big).
\label{be1}
\end{align}
For $i = 2, \ldots, n-1$,
\begin{align}
\frac{\partial}{\partial t}&\big(\phi_i  \rho_{g_i} c_{g_i} T_i +
(1-\phi_i) \rho_{r_i} c_{r_i} T_i + \eta_i c_{c_i} T_i \big) = 
- \frac{\partial}{\partial x} \left( \rho_{g_i} c_{g_i} v_i T_i \right)
\nonumber
\\ 
&\phantom{---} + Q_{h_i} r_i - Q_{i-1} (T_i - T_{i-1}) + Q_i (T_{i+1} - T_i) + 
\frac{\partial}{\partial x}\big(\lambda_{s_i} \frac{\partial T_i} {\partial x} \big). 
\label{be2}
\end{align}
For $i = n$,
\begin{align}
\frac{\partial}{\partial t}\big(\phi_n& \rho_{g_n} c_{g_n} T_n +
(1-\phi_n) \rho_{r_n} c_{r_n} T_n + \eta_n c_{c_n} T_n \big) = 
- \frac{\partial}{\partial x} \left( \rho_{g_n} c_{g_n}v_n T_n \right)
\nonumber
\\ 
&\phantom{-} + Q_{h_n} r_n - Q_{n-1} (T_n - T_{n-1}) - \hat Q_2 (T_n - T_e) + 
\frac{\partial}{\partial x}\big(\lambda_{s_n} \frac{\partial T_n} {\partial x} \big).
\label{be3}
\end{align}
\noindent {\it Balance of fuel mass}
\begin{align} \label{bmf1}
\frac{\partial \eta_i}{\partial t} = - r_i \,.
\end{align}
\noindent {\it Balance of oxygen mass}
\begin{align} \label{bmo1}
\frac{\partial}{\partial t}(\phi_i \rho_{g_i} Y_i) +\
\frac{\partial}{\partial x}(\rho_{g_i} v_i Y_i) = -m_i r_i \, .
\end{align}
\noindent {\it Balance of total gas mass}
\begin{align} \label{bmg1}
\frac{\partial}{\partial t}(\phi_i \rho_{g_i}) +
\frac{\partial}{\partial x}(\rho_{g_i} v_i) =
m_{g_i} r_i \, .
\end{align}
\noindent {\it Darcy$^{\prime}$s law}
\begin{align} \label{darcy1}
v_i = - K_{s_i} \frac{\partial p_i}{\partial x} \, .
\end{align}
We consider the conductivity in each layer as an average of the conductivities of the solid materials (rock and coke) and the gaseous phase, given by the expression:
\begin{align}\label{conductivity}
\lambda_{s_i} = (1-\phi_i)\big( (1 - l_i) \lambda_{r_i} + l_i \lambda_{c_i}\big) + \phi_i \lambda_{g_i},
\end{align}
where $ \lambda_{r_i}$, $\lambda_{c_i}$ and $\lambda_{g_i}$ represent the conductivities of rock, coke, and gas phase, 
respectively. The constant $l_i$ is such that $0 \leq l_i \leq 1$. 
The rate of coke consumption in the chemical reaction in each layer is assumed to be given by a version of Arrhenius' law:
\begin{align} \label{reaction-rate1}
r_i = A_{c_i} (Y_i p_{i})^{\alpha} \,\eta_{i}\, e^{-\frac{\hat E}{R T_{i}}} ,
\end{align}
where $ A_{c_i}$ is the Arrhenius constant, $\hat E$ is the activation energy, $\alpha$ is the order of the 
gaseous reaction rate, and $R$ is the gas constant. The values $\hat E$ and $\alpha$ are assumed to be the same for all layers. 

We introduce dimensionless variables for space and time:
\begin{align} \label{semdimensao}
\tilde x = \frac{x}{x^*}, \quad \tilde t = \frac{t}{t^*} \,,
\end{align}
where $x^*$ and $t^*$  are reference values for
space and time, respectively.
The dimensionless Darcy's velocity in the $i$th
layer is given by:
\begin{align} \label{darcy2}
\tilde v_i = \frac{t^*v_i}{x^*}\,.
\end{align}
We also introduce dimensionless variables for temperature,
pressure, gas density, and fuel concentration in the $i$th layer,
\begin{align} \label{semdimensao-Tp}
\tilde{T}_i= \frac{T_{i}}{T^*}, \quad
\tilde{p}_i= \frac{p_{i}}{p^*}, \quad
\tilde{\rho}_{g_i} = \frac{\rho_{g_i}}{\rho_g^*}, \quad \text{and}
\quad \tilde{\eta}_i = \frac{\eta_{i}}{\eta^o_i},
\end{align}
where $T^*$, $p^*$, and $\rho_g^*$ are the reference values for temperature, pressure,
and gas density, respectively, and $\eta^o_i$
is the initial fuel concentration in the $i$th layer, which is a function of the space variable $x$.
Thus, $\tilde{\eta}_i$ is the fraction of coke remaining in the  $i$th
layer, $0 \le \tilde{\eta}_i \le 1$. The reference gas density $\rho_g^*$
is obtained from the reference temperature and pressure using the state equation.

Defining the dimensionless function 
\begin{align} \label{eq:funcao-h}
h(T, \eta, Y, p) = (Y p)^{\alpha}\, \eta \, e^{-\frac{E}{T}} \, ,
\,\,\,\text{where} \,\,\, E = \frac{\hat E}{R T^*}\,,
\end{align}
the reaction rate function can be write as $r_i = A_{c_i}\,h(T_i, \eta_i, Y_i, p_i)$.
With these dimensionless variables, after dropping the tildes from
Eqs.~\eqref{semdimensao}-\eqref{semdimensao-Tp} and from $\hat T_e = T_e / T^*$, to simplify the notation, the system of 
Eqs.~\eqref{be1}--\eqref{darcy1} becomes
\begin{align}
&\frac{\partial}{\partial t}\Big(\left(\phi_1 \rho_g^* \rho_1 c_{g_1}
+ (1-\phi_1) \rho_{r_1} c_{r_1} + \eta_{1}^o c_{c_1} \eta_1 \right) T_1 \Big) = 
- \frac{\partial}{\partial x} \left( \rho_g^* \rho_1
c_{g_1} v_1 T_1 \right) \phantom{-----------}
\nonumber
\\
&\phantom{------} + \frac{t^* \eta_{1}^o A_{c_1} Q_{h_1} (p^*)^{\alpha}} {T^*}
h(T_1, \eta_1, Y_1, p_1) - t^* \hat Q_1 (T_1 - \hat T_e)   \
\nonumber
\\
&\phantom{------------} + t^* Q_1 (T_2 - T_1) + \frac{t^*}{(x^*)^2} \frac{\partial}{\partial x}
\big(\lambda_{s_1} \frac{\partial T_1}{\partial x}\big),  
\label{be12}
\end{align}
\begin{align}
&\frac{\partial}{\partial t}\Big(\left(\phi_i \rho_g^* \rho_i c_{g_i}
+ (1-\phi_i) \rho_{r_i} c_{r_i} + \eta_{i}^o c_{c_i} \eta_i  \right)
T_i \Big) = - \frac{\partial}{\partial x} \left( \rho_g^* \rho_i
c_{g_i} v_i T_i \right) \phantom{---------}
\nonumber
\\
&\phantom{------} + \frac{t^* \eta_{i}^o A_{c_i} Q_{h_i} (p^*)^{\alpha}} {T^*}
h(T_i, \eta_i, Y_i, p_i) - t^* Q_{i-1} (T_i - T_{i-1})  
\nonumber
\\
&\phantom{------------} + t^* Q_i (T_{i+1} - T_i) + \frac{t^*}{(x^*)^2} \frac{\partial}{\partial x}
\big(\lambda_{s_i} \frac{\partial T_i}{\partial x}\big),  
\label{bei2}
\end{align}
\begin{align}
&\frac{\partial}{\partial t}\Big(\left(\phi_n \rho_g^* \rho_n c_{g_n}
+ (1-\phi_n) \rho_{r_n} c_{r_n} + \eta_{n}^o c_{c_n} \eta_n \right)
T_n \Big) = - \frac{\partial}{\partial x} \left( \rho_g^* \rho_n
c_{g_n} v_n T_n \right) \phantom{----------} 
\nonumber
\\
&\phantom{------} + \frac{t^* \eta_{n}^o A_{c_n} Q_{h_n} (p^*)^{\alpha}} {T^*}
h(T_n, \eta_n, Y_n, p_n) - t^* Q_{n-1} (T_n - T_{n-1})
\nonumber
\\
&\phantom{-----------} - t^* \hat Q_2 (T_n - \hat T_e) + \frac{t^*}{(x^*)^2} \frac{\partial}{\partial x}
\big(\lambda_{s_n} \frac{\partial T_n}{\partial x}\big),  
\label{ben2}
\end{align}
\begin{align} \label{bmf2}
\frac{\partial \eta_i}{\partial t} = - t^* A_{c_i}(p^*)^{\alpha}\,h(T_i, \eta_i, Y_i, p_i)\,, 
\end{align}
\begin{align} \label{bmo2}
\frac{\partial}{\partial t}(\phi_i \rho_g^* \rho_{g_i} Y_i) +
\frac{\partial}{\partial x}(\rho_g^* \rho_{g_i} v_i Y_i) = -
t^* m_{o_i} \eta^o_i A_{c_i} (p^*)^{\alpha}\,h(T_i, \eta_i, Y_i, p_i)\,,
\end{align}
\begin{align} \label{bmg2}
\frac{\partial}{\partial t}(\phi_i \rho_g^* \rho_{g_i}) +
\frac{\partial}{\partial x}(\rho_g^* \rho_{g_i} v_i) = t^*
m_{g_i} \eta^o_i A_{c_i} (p^*)^{\alpha}\,h(T_i, \eta_i, Y_i, p_i)\,, 
\end{align}
\begin{align} \label{eq:darcy-2}
v_i = - \frac{t^* p^* K_{s_i}}{(x^*)^2 } \frac{\partial p_i}{\partial x} \,. \
\end{align}

Now, let us divide Eqs.~\eqref{be12}, \eqref{bei2}, and \eqref{ben2} by the mean value 
\begin{align} \label{valore-medio}
\bar{\rho_r c_r} = \frac 1n \sum_{i=1}^n \rho_{r_i} c_{r_i}\,,
\end{align}
\eqref{bmf2} by $\eta_{r_i}^o$, and 
\eqref{bmo2} and \eqref{bmg2} by $\rho_g^*$. 
The reason for dividing Eqs. \eqref{be12}, \eqref{bei2}, and \eqref{ben2} by the given constant is that all
coefficients in the resulting equations vary in a range
convenient for numerical computation. For $i = 1, \ldots, n$, we obtain the following dimensionless system:
\begin{align} 
\frac{\partial}{\partial t}&\big((a_1 + b_1 \eta_1) T_1 \big)+
\frac{\partial}{\partial x} \left( \hat c_1 v_1 T_1 \right) = 
\nonumber
\\
&\hat d_1\,h(T_1, \eta_1, Y_1, p_1) - \hat q_1 (T_1 - \hat T_e) + 
q_1 (T_2 - T_1) + \
\frac{\partial}{\partial x} \big(\lambda_1 \frac{\partial T_1}{\partial x}\big)\,,
\label{be13}
\end{align}
\begin{align}
\frac{\partial}{\partial t}&\big((a_i + b_i \eta_i) T_i \big) + 
\frac{\partial}{\partial x} \left(\hat c_i v_i T_i \right) = 
\nonumber
\\
&\hat d_i\,h(T_i, \eta_i, Y_i, p_i) - q_{i-1} (T_i - T_{i-1}) + q_i (T_{i+1} - T_i) + 
\frac{\partial}{\partial x} \big( \lambda_i \frac{\partial T_i}{\partial x}\big)\,,
\label{bei3} 
\end{align}
\begin{align}
&\frac{\partial}{\partial t}\big((a_n + b_n \eta_n ) T_n \big) + 
\frac{\partial}{\partial x} \left(\hat c_n v_n T_n \right) = \phantom{-}\
\nonumber
\\
& \hat d_n\, h(T_n, \eta_n, Y_n, p_n) - q_{n-1} (T_n - T_{n-1}) - \hat q_2 (T_n - \hat T_e) + 
\frac{\partial}{\partial x} \big(\lambda_n \frac{\partial T_n}{\partial x}\big)\,,
\label{ben3} 
\end{align}
\begin{align} \label{bmf3}
\frac{\partial \eta_i}{\partial t} = - \hat A_i \, h(T_i, \eta_i, Y_i, p_i)\,, 
\end{align}
\begin{align} \label{bmo3}
\frac{\partial}{\partial t}(\phi_i \rho_{g_i} Y_i) +
\frac{\partial}{\partial x}(\rho_{g_i} v_i Y_i) = - B_i \, h(T_i, \eta_i, Y_i, p_i)\,,
\end{align}
\begin{align} \label{bmg3}
\frac{\partial}{\partial t}(\phi_i \rho_{g_i}) +
\frac{\partial}{\partial x}(\rho_{g_i} v_i) = D_i \, h(T_i, \eta_i, Y_i, p_i)\,,
\end{align}\
\begin{align} \label{darcy3}
v_i = - K_i \frac{\partial
	p_i}{\partial x} \,. 
\end{align}
\noindent where $0 < x < l$, $t > 0$. In Eq.~\eqref{bei3} 
$i = 2, \ldots, n-1$; in Eqs.~\eqref{bmf3}, \eqref{bmo3}, \eqref{bmg3}, and \eqref{darcy3} $i = 1, \ldots, n$. The coefficients are given by
\begin{align} \label{parametros1}
a_i=\frac{\phi_i \rho_g^* \rho_{g_i} c_{g_i} + (1-\phi_i) \rho_{r_i}
	c_{r_i}} {\bar{\rho_r c_r}}, \,\, b_i = \frac{\eta^o_i
	c_{c_i}}{\bar {\rho_r c_r}}, \,\, \hat c_i = \frac{\rho_g^* \rho_{g_i} c_{g_i}}{\bar {\rho_r c_r}}\,,
\end{align}
\begin{align} \label{parametros2}
\hat d_i = \frac{A_i \eta^o_i Q_{h_i}} {T^* \bar {\rho_r c_r}}, \,\,
\hat A_i = t^* A_{c_i} (p^*)^{\alpha}, \,\,\,
\lambda_i = \frac{t^* \lambda_{s_i}}{(x^*)^2 \bar {\rho_r c_r}}, \,\,\, 
B_i = \frac{m_{o_i} A_i \eta^o_i}{\rho_g^*}\,, 
\end{align}
\begin{align} \label{parametros3} 
D_i = \frac{m_{g_i} A_i \eta^o_i}{\rho_g^*}, \,\,\, K_i = \frac{t^* p^* K_{s_i}} {(x^*)^2}, \,\, q_i = \frac{t^* Q_i}{\bar {\rho_r c_r}}, \quad \hat q_1 = \frac{t^* \hat Q_1}{\bar {\rho_r c_r}}, 
\quad \hat q_2 = \frac{t^* \hat Q_2}{\bar {\rho_r c_r}} \,,
\end{align} 
where in Eqs.~\eqref{parametros1}, \eqref{parametros2}, and \eqref{parametros3} $i = 1, \ldots, n$, except for $q_i$ in 
Eq.~\eqref{parametros3}, where $i = 1, \ldots, n-1$.

The coefficients defined in Eqs.~\eqref{parametros1}--\eqref{parametros3}
are dependent on physical quantities in the layers. In this work, we consider all of them to be dependent on the spatial variable $x$ rather than being constants.

Therefore, $a_i$, $b_i$, $c_i$, $d_i$, $\lambda_i$, 
$A_i$, $B_i$, $D_i$, $q_i$, $\hat q_1$, $\hat q_2$, and 
$K_i$, are all depending on $x$, and they are all non-negative, except $D_i$, which, depending on the 
stoichiometric coefficient $m_{g_i}$, may be positive, negative, or zero.

A convenient way to analyze the system of Eqs.~\eqref{be13}-\eqref{darcy3} is to assume, initially, that 
the fluids are incompressible. This allows us to disregard volume and pressure changes due to chemical reaction.
These assumptions simplify our equations and isolate the primary temperature effects.
Therefore, for  $i = 1,\ldots, n$, we assume that $\rho_i$ is a constant average value, denoted by $\bar \rho_i$, 
independent on temperature and pressure, and $m_{g_i}=0$, which implies that $\tilde D_i=0$.

We are interested in the physical situation in which oxygen is injected into the porous medium at $x=0$, 
and all the solid fuel burns, causing a reaction front to propagate to the right. As $\tilde D_i=0$, from Eq.~\eqref{bmg3}, we have $\frac{\partial v_i}{\partial x} = 0$.
Thus, $v_i$ depends on time only. One can relate it to boundary conditions at the injection end. For simplicity, we assume it to be constant. Also, from Eq.~\eqref{darcy3}, we see 
that $p_i$ can be easily calculated using the injection pressure.

With these simplifications, Eqs.~\eqref{bmg3} and \eqref{darcy3} are automatically satisfied, and 
Eqs.~\eqref{be13}-\eqref{bmf3} are coupled with Eq.~\eqref{bmo3} only by the factor $(Y_i p_i)^{\alpha}$ in the function $h$. For simplicity, we take this factor as a known constant 
average value. Then, from Eqs.~(\ref{be13}) and (\ref{bmf3}), we obtain the following system modeling the temperatures and the fuel concentrations only. To simplify the notation, we rename the temperature and the fuel concentration in the $n$ layers, $u_i  = T_i$ and $y_i = \eta_i$, for $i = 1, \ldots, n$, we also rename $u_e = \hat T_e$.
\begin{align} 
\frac{\partial}{\partial t}\big((a_1 + b_1 y_1) u_1 \big)+
\frac{\partial}{\partial x} \left( c_1 u_1 \right) = \phantom{---}
\nonumber
\\
d_1\,y_1 g(u_1) - \hat q_1 (u_1 - u_e) &+ 
q_1 (u_2 - u_1) + 
\lambda_1 \frac{\partial^2 u_1}{\partial x^2} ,
\label{be14}
\end{align}
\begin{align}
\frac{\partial}{\partial t}\big((a_i + b_i y_i) u_i \big) + 
\frac{\partial}{\partial x} \left(c_i u_i \right) = \phantom{---}
\nonumber
\\
d_i\,y_i g(u_i) - q_{i-1} (u_i - u_{i-1}&) + q_i (u_{i+1} - u_i) + 
\lambda_i \frac{\partial^2 u_i}{\partial x^2}\,,
\label{bei4} 
\end{align}
\begin{align}
\frac{\partial}{\partial t}\big((a_n + b_n y_n ) u_n \big) + 
\frac{\partial}{\partial x} \left(c_n u_n \right) = 
\nonumber
\\
d_n\, y_n g(u_n) - q_{n-1} (u_n - &u_{n-1}) - \hat q_2 (u_n - u_e) + \lambda_n \frac{\partial^2 u_n}{\partial x^2}\,,
\label{ben4} 
\end{align}
\begin{align} \label{bmf4}
\frac{\partial y_i}{\partial t} = - A_i y_i g(u_i)\,, 
\end{align}
where $0 < x < l$, $t > 0$. The function $g$ 
is related to the Arrhenius law, and it is given by
\begin{equation}\label{darcy} 
g(\theta)=\left\{\begin{array}{c}
e^{-\frac{E}{\theta}},\;\;\mbox{se}\;\; \theta>0\\
0,\;\;\mbox{se}\;\; \theta\leq 0\,,
\end{array}\right.
\end{equation}
where $E>0$.

The new coefficients are
\begin{align}\label{new-coeficients}
c_i = \hat c_i \,v_i, \quad d_i = (Y_i p_i)^{\alpha} \hat d_i , \quad  A_i = (Y_i p_i)^{\alpha} \hat A_i, 
\quad i = 1, \ldots, n. 
\end{align}
The other coefficients $a_i$, $b_i$, and $\lambda_i$, for $i = 1, \ldots, n$, $q_i$, for  $i = 1, \ldots, n-1$, 
and $\hat q_i$, for $i = 1,\,\, 2$, are the same as defined in Eqs.~\eqref{parametros1}-\eqref{parametros3}. 
In this study, we consider all these coefficients as depending on the spatial variable $x$.

This work regards the fuel concentration $y_i = y_i(x, t)$ as a known function. Thus, from \eqref{be14}-\eqref{ben4} we can write the following system modeling only the temperatures in the $n$ layers,
\begin{align} \label{vetorial-model}
u_t + L(t)u = f(x, t, u), \,\,\, x \in \R, \,\,\, t >0,	
\end{align}
where $u = (u_1, \ldots, u_n)$ is the vector of the unknown temperatures, 
$L(t)u = \big(L_1(t)u_1, $ $\ldots, L_n(t)u_n \big)$, is the partial differential operator, each component defined by
\begin{equation}\label{operatorLi}
L_i(t)u_i := -\alpha_i(x, t)\,\partial_{xx}u_i + \beta_i(x, t)\,\partial_{x}u_i,\,\,\,
i = 1, \ldots, n,
\end{equation}
being 
\begin{equation}\label{coeficients}
\alpha_i(x, t) = \frac{\lambda_i}{a_i + b_i y_i(x, t)}, \,\,\, \mbox{and} \,\,\, \beta_i(x, t) = 
\frac{c_i}{a_i + b_i y_i(x, t)}.
\end{equation}
The source function is $f = (f_1, \ldots, f_n)$, where 
\begin{align} \label{reaction-function} 
	&f_1(x, t, u) = \frac{-(c_1)_x\,u_1 }{a_1 + b_1 y_1} + 
	\frac{(K_1 b_1 u_1+d_1)y_1\,g(u_1)}{a_1 + b_1 y_1} + \frac{q_1(u_2 - u_1)}{a_1 + b_1 y_1}
	\nonumber
	\\   
	& \hspace{0.8in} - \frac{\hat q_1(u_1 - u_e)}{a_1+b_1 y_1}\,,
	\nonumber \\
	&f_i(x, t, u) = \frac{-(c_i)_x\,u_i }{a_i + b_i y_i} + 
	\frac{(K_i b_i u_i+d_i)y_i\,g(u_i)}{a_i + b_i y_i} - \frac{q_{i-1}(u_i - u_{i-1})}{a_i + b_i y_i}
	\\
	& \hspace{0.8in} + \frac{q_i(u_{i+1} - u_i)}{a_i + b_i y_i}, \quad i = 2, \ldots, n-1\,,
	\nonumber \\
	&f_n(x, t, u) = \frac{-(c_n)_x\,u_n }{a_n + b_n y_n} + 
	\frac{(K_n b_n u_n+d_n)y_n\,g(u_n)}{a_n + b_n y_n} - \frac{q_{n-1}(u_n - u_{n-1})}{a_n + b_n y_n}
	\nonumber
	\\
	& \hspace{0.8in} - \frac{\hat q_2(u_n - u_e)}{a_n + b_n y_n}\,. 
	\nonumber 
\end{align}

\bibliographystyle{plain}
\bibliography{combustion}
\end{document}